\documentclass[final,12pt]{elsarticle}
\usepackage{xcolor,soul,cancel}
\usepackage[color]{showkeys}
\usepackage{amssymb,amsmath,amsthm,mathtools}
\usepackage{graphicx,float,enumitem,hyperref}
\newlist{alphalist}{enumerate}{1} 
\setlist[alphalist]{label=(\alph*)} 
\journal{arXiv}
\definecolor{refkey}{rgb}{0,1,1}
\definecolor{labelkey}{rgb}{1,0,0}

\numberwithin{equation}{section}

\usepackage[a4paper,margin=27mm]{geometry}
\usepackage{amsmath,amssymb,amsthm,mathtools}
\usepackage{microtype}
\usepackage{enumitem}

\newtheorem{thm}{Theorem}[section]
\newtheorem{lem}[thm]{Lemma}
\newtheorem{cor}[thm]{Corollary}
\newtheorem{prop}[thm]{Proposition}
\newtheorem{rem}[thm]{Remark}
\newtheorem{ex}[thm]{Example}
\newtheorem{defi}[thm]{Definition}

\theoremstyle{remark}

\newcommand{\R}{\mathbb R}
\newcommand{\E}{\mathbb E}
\newcommand{\Var}{\operatorname{Var}}
\newcommand{\Cov}{\operatorname{Cov}}
\newcommand{\Unif}{\operatorname{Unif}}

\newcommand{\Gini}{\mathcal D}

\theoremstyle{definition}

\begin{document}
\begin{frontmatter}
\title{The Zaporozhets–Tarasov Conjecture on Mean Distances}
\author[eric]{Eric Shen}
\ead{erick.2013@yandex.ru}
\address[eric]{Moscow State University, Moscow, 119991, Russia \\
Institute for Numerical Mathematics, Russian Academy of Sciences, Russia}
\date{August 2026}

\begin{abstract}
For a convex body  $K\subset\R^d$  let $\Delta(K)$ be the expected distance between two independent uniform points of $K$, and let $\theta(K)$ be the expected distance between two uniform points on $\partial K$. The Zaporozhets–Tarasov conjecture asserts $\Delta(K) \le \theta(K)$. We prove this conjecture in case $d=2$. In addition, we give a six-vertex convex polytope in $\mathbb R^3$ for which the reverse strict inequality holds, and obtain counterexamples in every dimension $d \ge 3$ by taking products with segments. This settles the conjecture in every dimension $d \geq 2$. Finally, we show that
$\theta(K) > \frac{\operatorname{per}K}{6}$ for every planar convex body.
\\ \ \\
MSC2020: 52A40, 60D05, 52A22
\end{abstract}

\begin{keyword}
convex body; mean distance; Gini mean difference; Zaporozhets--Tarasov conjecture.
\end{keyword}
\end{frontmatter}

\section{Introduction}
Let $K \subset \mathbb{R}^d$ be a compact convex body. The conjecture of Zaporozhets and Tarasov asks whether
$$
\Delta(K):=\E|X_1-X_2|\leq \E|B_1-B_2|=: \theta(K)
$$
for every convex body $K\subset\R^d$, where $X_1,X_2$ are uniform in $K$ with respect to the normalized Lebesgue measure and $B_1,B_2$ are uniform on $\partial K$ with respect to the normalized arclength measure.

Lotnikov proved the planar centrally symmetric case and an analogue for sufficiently high moments in arbitrary dimension \cite{Lotnikov}. Tokmachev proved the conjecture, in fact a convex-order statement, for planar bodies whose area and boundary centroids coincide and for a class of circumscribed polytopes \cite{Tokmachev2026}. Related extremal questions for $\Delta$ and $\theta$ were studied in \cite{BonnetEtAl,TokmachevBoundary}.
In particular, Bonnet, Gusakova, Th\"ale and Zaporozhets proved the sharp inequalities
$$
    \frac{7}{60}
    <
    \frac{\Delta(K)}{\operatorname{per} K}
    <
    \frac{1}{6}
$$
for planar convex bodies~\cite{BonnetEtAl}, where $\operatorname{per} K$ stands for perimeter of $K$.  It is also shown that the upper bound is approached by thin rectangles converging to a doubled segment.
According to Tokmachev~\cite[Eq.~(5)]{Tokmachev2026},
Gusakova and Zaporozhets suggested an analogous pair of inequalities for $\theta$
$$
    \frac{1}{6}
    <
    \frac{\theta(K)}{\operatorname{per} K}
    \leq
    \frac{2}{\pi^2}.$$
The upper bound, with equality only for the disk, was proved by
Tokmachev~\cite{TokmachevBoundary}; we prove the lower bound, see Theorem~\ref{thm:lower-boundary-distance}.
\par Prior to that, we prove the Zaporozhets-Tarasov conjecture in the case $d=2$, see Corollary~\ref{corol:conjecture_res}. The result is mostly self-contained, in particular, it does not use the aforementioned inequalities on $\Delta(K)$ from \cite{BonnetEtAl} and is obtained as a corollary of a more general statement, see Theorem~\ref{thm:endpoint}. We also prove an analogous inequality for the second moment, see Corollary~\ref{cor:sec_mom_planar}. In addition to that, we show that the conjecture is false in every dimension $d\geq3$, see Example~\ref{ex:counter-ex}, Figure~\ref{fig:body} and Proposition~\ref{prop:counter-ex}.

\begin{defi}
    For a probability measure $\rho$ on $\R$, write
$$
 \Gini(\rho)=\iint_{\R^2}|x-y|\,d\rho(x)d\rho(y).
$$
This is the \textit{Gini mean difference} of $\rho$.
\end{defi}
We slightly abuse this notation throughout the text, writing $\Gini(X)$ with $X$ being a random variable or a probability density. In either case, $\Gini(X)$ is understood as $\Gini(\rho_X)$, where $\rho_X$ denotes the corresponding probability measure. For basic properties of the Gini mean difference used throughout the paper we refer the reader to \cite{Yitzhaki}.
The main result immediately follows from a stronger directional inequality.

\begin{thm}\label{thm:main}
Let $K\subset\mathbb R^2$ be a convex body. Fix a unit vector $u$. Let $p_u$ be the pushforward of normalized area measure on $K$ under $x\mapsto\langle x,u\rangle$, and let $q_u$ be the corresponding pushforward of normalized arclength measure on $\partial K$. Then
\begin{equation}\label{eq:directional}
 \Gini(q_u)\ge \Gini(p_u).
\end{equation}
\end{thm}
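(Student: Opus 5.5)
Rotate so that $u=e_1$ and let $K$ project onto $[a,b]$. Write $K=\{(t,y):a\le t\le b,\ g(t)\le y\le f(t)\}$ with $f$ concave and $g$ convex, so the width $w=f-g$ is concave and $p_u$ has density $w/|K|$. The boundary measure $q_u$ then has density $\bigl(\sqrt{1+f'^2}+\sqrt{1+g'^2}\bigr)/\operatorname{per}K$ on $(a,b)$. In addition it carries atoms at $a$ and $b$ of mass $w(a)/\operatorname{per}K$ and $w(b)/\operatorname{per}K$, coming from possible vertical edges. I will work throughout with the representation
$$\Gini(\rho)=2\int_{\R}F_\rho(1-F_\rho)\,dt,$$
and by scaling assume $[a,b]=[0,1]$.

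\emph{Step 1: split $q_u$ into a uniform part and an end-weighted remainder.} Since $\sqrt{1+s^2}\ge 1$, I write
$$q_u=\lambda\,\Unif[0,1]+(1-\lambda)R,\qquad \lambda=\frac{2}{\operatorname{per}K}.$$
The remainder $R$ has density proportional to $\bigl(\sqrt{1+f'^2}-1\bigr)+\bigl(\sqrt{1+g'^2}-1\bigr)$, plus the endpoint atoms. Because $f'$ is nonincreasing, $|f'|$ is first nonincreasing and then nondecreasing, so each summand of $R$ is quasiconvex, i.e.\ U-shaped. The same holds for the $g$-summand. Hence $R$ is a sum of two U-shaped measures, pushing mass toward $0$ and $1$.

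\emph{Step 2: bound $\Gini(p_u)$ from above for concave densities.} I will prove that a concave density on $[0,1]$ has Gini at most $1/3$, the uniform value. The argument is that for a concave density one has $F(1-F)\le$ the corresponding quantity for a suitable uniform comparison. I will also need a sharper version that depends on the mean $m$ of $p_u$: roughly $\Gini(p_u)\le \phi(m)$, with equality attained by trapezoids or triangles. For instance the triangle with vertex at an endpoint gives $4/15$. This is the kind of "endpoint" extremal statement I expect Theorem~\ref{thm:endpoint} to supply; I would prove it by showing that the extremal concave densities with a prescribed mean are piecewise linear with a kink only at an endpoint.

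\emph{Step 3: lower bound $\Gini(q_u)$.} Expanding the mixture gives
$$\Gini(q_u)=\lambda^2\cdot\tfrac13+2\lambda(1-\lambda)\,\E|U-Y|+(1-\lambda)^2\Gini(R),\qquad U\sim\Unif[0,1],\ Y\sim R.$$
Here $\E|U-y|=\bigl(y^2+(1-y)^2\bigr)/2$, which is at least $1/3$ exactly when $y$ lies at distance at least about $0.21$ from the centre. Thus the cross and remainder terms beat $1/3$ only if $R$ is sufficiently end-heavy. I would exploit the U-shape of each summand of $R$ together with the mean constraint on $p_u$. The key geometric input is that if $|K|$ is lopsided, with $m$ far from $1/2$, then $f$ and $g$ are steep near the thin end. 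That steepness forces $R$ to put mass there and raises $\Gini(q_u)$ above $\phi(m)$.

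\emph{Main obstacle.} The hard step is Step 3: matching the lower bound for $\Gini(q_u)$ against the mean-dependent upper bound $\phi(m)$ from Step 2. The crude bound $\Gini(p_u)\le 1/3$ does not suffice, because $R$ may be concentrated near the middle whenever $f$ and $g$ have kinks there. My first attempt would be a reduction argument. Replacing $K$ by the body with the same width function but $f=g+w$ symmetrised, i.e.\ Steiner symmetrisation in direction $e_2$, leaves $p_u$ unchanged. It changes the boundary density to $2\sqrt{1+(w'/2)^2}$, which should decrease $\Gini(q_u)$ or at least allow comparison. This reduces the problem to a single concave function $w$. There I would prove $\int(Q-P)(1-Q-P)\,dt\ge0$ by a single-crossing analysis of $Q-P$, handling the atoms at the endpoints separately.
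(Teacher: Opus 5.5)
Your plan has a genuine gap. Step~3, which you correctly flag as the heart of the matter, is never carried out, and the reduction you propose for it goes the wrong way: Steiner symmetrization in direction $e_2$ does \emph{not} decrease $\Gini(q_u)$ in general.

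Take the parallelogram with vertices $(0,0),(1,1),(1,2),(0,1)$ and $u=e_1$. Then $w\equiv1$ and $q_u=\alpha\,\Unif[0,1]+\tfrac{1-\alpha}{2}(\delta_0+\delta_1)$ with $\alpha=2-\sqrt2$. The symmetrized body is the unit square, which gives the same form with $\alpha=\tfrac12$. Since $\Gini$ of this mixture equals $\tfrac12-\tfrac{\alpha^2}{6}$, symmetrization raises $\Gini(q_u)$ from $\tfrac{2\sqrt2}{3}-\tfrac12\approx0.443$ to $\tfrac{11}{24}\approx0.458$. By continuity the same happens for nearby smooth strictly convex bodies. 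The pointwise inequality $\sqrt{1+f'^2}+\sqrt{1+g'^2}\ge2\sqrt{1+(w'/2)^2}$ is true, but after normalization it shifts relative mass toward the ends, which here increases $\Gini$. So the reduction to a single concave $w$ is unavailable.

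Even in that reduced setting, the single-crossing step is only announced. One sign change of $Q-P$ does not by itself fix the sign of $\int(Q-P)(1-Q-P)$, because $1-Q-P$ generally changes sign at a different point. The mean-dependent bound $\phi(m)$ of Step~2 is also only conjectured.

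The missing idea is an exact coupling between $q_u$ and $p_u$. Your split into a uniform part plus a U-shaped remainder only sees $|f'|$ and $|g'|$. It thereby discards exactly the sign information that links boundary to area through $w'=f'-g'$.

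The paper works with a smooth strictly convex $K$, $a=f'$, $b=g'$, and $R=\int_0^1\bigl(\sqrt{1+a^2}+\sqrt{1+b^2}\bigr)\,dx$. It writes $q_u=\tfrac12(\varphi+\psi)$ with $\varphi=\bigl(\sqrt{1+a^2}+a+\sqrt{1+b^2}-b\bigr)/R$ nonincreasing and $\psi=\bigl(\sqrt{1+a^2}-a+\sqrt{1+b^2}+b\bigr)/R$ nondecreasing. Both are probability densities, and $\Phi-\Psi=2w/R$, so $p_u$ is precisely the normalized difference of their CDFs.

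The result then follows from the one-dimensional Theorem~\ref{thm:endpoint}, which is not an extremal statement about concave densities with given mean. Represent $X=TU\sim\varphi$ and $Y=1-SV\sim\psi$. The identity $\Gini(q)=\tfrac12(\Gini(X)+\Gini(Y))+\tfrac12\int_0^1(\Phi-\Psi)^2$ gives $\Gini(q)\ge\tfrac13B$. On the other side, $\Gini(p)\le\tfrac{2}{\sqrt3}\sqrt{\Var Z}$ together with an explicit moment computation in $T,S$ gives $\Gini(p)\le\tfrac13B$, with the same $B$. This common parameter exists only because $p$ and $q$ are built from the same pair $(\varphi,\psi)$, and it is what makes the two bounds meet. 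Independent bounds of the kind in your Steps~2 and~3 have no reason to.
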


We first prove the theorem for smooth strictly convex bodies and the general case follows by approximation.

\medskip
\noindent\textbf{Plan.}
After rotating and rescaling, the projection of $K$ is $[0,1]$ and
$$
 K=\{(x,y):0\le x\le1,\ g(x)\le y\le f(x)\},
$$
where $f$ is concave and $g$ is convex. Put
$$
 W=f-g,\qquad a=f',\qquad b=g'.
$$
The projected area density is proportional to $W$, while the projected boundary density is proportional to
$$
 \sqrt{1+a^2}+\sqrt{1+b^2}.
$$
The proof has two steps. First, we find two densities $\varphi$ and $\psi$, nonincreasing and nondecreasing, respectively, such that the projected arclength density $q$ satisfies $q = \frac{\varphi+\psi}{2}$ while the projected area density $p$ is proportional to the difference of the CDFs of $\varphi$ and $\psi$. Then we apply a one-dimensional Theorem saying that an endpoint mixture is more dispersed than the normalized occupation measure between the endpoints.
The only substantial step is the second one, to which we now turn.

\section{The monotone-endpoint theorem}

\begin{thm}
    \label{thm:endpoint}
Let $\varphi$ be a nonincreasing probability density and $\psi$ a nondecreasing probability density on $[0,1]$. Let
$$
 \Phi(x)=\int_0^x\varphi(s)\,ds,
 \qquad
 \Psi(x)=\int_0^x\psi(s)\,ds.
$$
Put
$$
 H=\Phi-\Psi,
 \qquad
 A=\int_0^1H(x)\,dx,
$$
and assume $A>0$. Define
$$
 p(x)=\frac{H(x)}A,
 \qquad
 q(x)=\frac{\varphi(x)+\psi(x)}2.
$$
Then
$$
 \Gini(q)\ge \Gini(p).
$$
\end{thm}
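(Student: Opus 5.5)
The plan is to rewrite both Gini mean differences through cumulative distribution functions, which makes $q$ and $p$ directly comparable. For a probability density $r$ on $[0,1]$ with CDF $R$ we have the standard identity (cf.\ \cite{Yitzhaki})
$$
\Gini(r)=2\int_0^1 R(x)\bigl(1-R(x)\bigr)\,dx .
$$
The CDF of $q$ is $Q=(\Phi+\Psi)/2$. Using $Q=\Phi-H/2=\Psi+H/2$ gives the decomposition
$$
Q(1-Q)=\tfrac12\bigl[\Phi(1-\Phi)+\Psi(1-\Psi)\bigr]+\tfrac14 H^2,
\qquad\text{hence}\qquad
\Gini(q)=\tfrac12\bigl(\Gini(\varphi)+\Gini(\psi)\bigr)+\tfrac12\int_0^1H^2 .
$$
For $p$, put $G(x)=\int_0^x H$. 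Then $P=G/A$, and
$$
\Gini(p)=\frac{2}{A^2}\int_0^1 G\,(A-G).
$$
Monotonicity gives $\Phi(x)\ge x\ge\Psi(x)$, so $H\ge0$, $G$ is nondecreasing, and $0\le G\le A$. The goal therefore becomes the homogeneous inequality
$$
A^2\Bigl(\Gini(\varphi)+\Gini(\psi)+\int_0^1H^2\Bigr)\ \ge\ 4\int_0^1 G(A-G).
$$

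Next I would parametrize the monotone densities by their layer-cake representations. Write $\varphi=\int t^{-1}\mathbf 1_{[0,t]}\,d\mu(t)$ and $\psi=\int (1-s)^{-1}\mathbf 1_{[s,1]}\,d\nu(s)$ with probability measures $\mu,\nu$ on $[0,1]$. Then $\Phi=\int\min(x/t,1)\,d\mu(t)$ and $\Psi=\int \bigl(\frac{x-s}{1-s}\bigr)_+\,d\nu(s)$. Every quantity in the inequality is then a polynomial (quadratic or quartic) in $(\mu,\nu)$. The basic building blocks should be explicit. $H$ is piecewise linear, and for a single pair $\mu=\delta_t$, $\nu=\delta_s$ it has a trapezoidal or triangular graph. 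In that two-parameter family I would verify the inequality by direct computation, which gives a polynomial inequality in $(t,s)$ checkable by hand. The endpoint cases $t\to0$, $s\to 1$ (uniform versus near-point masses) and $t=1$, $s=0$ (where $H\equiv0$ and $A=0$, excluded) show where the inequality is tight.

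The main obstacle is passing from these building blocks to general $(\mu,\nu)$. The difficulty is that the right-hand side is not linear in the mixing measures: $\Gini(p)$ carries the normalization $A$ and the quadratic term $G^2$. My first attempt would be a more careful decomposition. The left side contains the concave-in-measure term $\tfrac12(\Gini(\varphi)+\Gini(\psi))$. I would try to show that $\Gini(q)-\Gini(p)$, viewed as a function of $\mu$ with $\nu$ fixed (and symmetrically), attains its minimum at extreme points. One way would be to show it is concave along segments $\mu_\lambda=(1-\lambda)\mu_0+\lambda\mu_1$, by checking that the second derivative in $\lambda$ of $A^2\Gini(q)-4\int G(A-G)$ is nonpositive. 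If that fails, my fallback is a direct pointwise estimate. I would seek a kernel inequality $G(A-G)\le \frac{A^2}{4}\bigl(\Phi(1-\Phi)+\Psi(1-\Psi)+\tfrac12 H^2\bigr)$ after integration, exploiting $G'=H$ together with $\Phi$ concave and $\Psi$ convex. Concretely, I would bound $G(x)\le A\,\Phi(x)$-type comparisons on the left part and $A-G(x)\le A(1-\Psi(x))$ on the right part, splitting $[0,1]$ at the median of $p$. I expect this comparison step, controlling the quadratic $G$-term by the monotone structure, to be where the real work lies.
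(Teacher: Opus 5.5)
\textbf{There is a genuine gap: the general case is not proved.} Your reformulation is correct and matches the paper's starting point. The identity $\Gini(q)=\tfrac12(\Gini(\varphi)+\Gini(\psi))+\tfrac12\int_0^1H^2$ is \eqref{eq:Dqidentity}, $\Gini(p)=\frac{2}{A^2}\int_0^1G(A-G)$ is right, and your mixing measures are Proposition~\ref{prop:monotone_factoriz}. But the proposal stops where the real content begins, and the route you suggest for general $(\mu,\nu)$ does not work.

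\textbf{Concavity along segments fails.} If $\mu_0,\mu_1$ have the same mean, then $A$ is constant along $\mu_\lambda$. In that case $\Gini(q)$ has second derivative $-\int_0^1(\Phi_1-\Phi_0)^2$, while $-\Gini(p)=-\frac{2}{A}\int G+\frac{2}{A^2}\int G^2$ has second derivative $+\frac{4}{A^2}\int_0^1g^2$, where $g(x)=\int_0^x(\Phi_1-\Phi_0)$. Take $\psi$ uniform on $[\epsilon,1]$, $\mu_0=\delta_{1-\epsilon}$ and $\mu_1=\frac12(\delta_1+\delta_{1-2\epsilon})$. Then $A=\epsilon$, $\Phi_1-\Phi_0=\frac{\epsilon^2x}{(1-\epsilon)(1-2\epsilon)}$ on $[0,1-2\epsilon]$, and $|\Phi_1-\Phi_0|\le\epsilon/2$ on $[1-2\epsilon,1]$. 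Hence $\frac{4}{A^2}\int g^2\ge(1+o(1))\frac{\epsilon^2}{5}$, whereas $\int(\Phi_1-\Phi_0)^2=O(\epsilon^3)$. So $\Gini(q)-\Gini(p)$, and its homogenized version (a constant multiple here), is strictly convex on this segment. The reduction to $\mu=\delta_t$, $\nu=\delta_s$ is therefore unavailable by this route.

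\textbf{The fallback is only a heuristic, and the kernel inequality as written is false.} Your (correct) homogeneous form is equivalent to $\int G(A-G)\le\frac{A^2}{2}\int\bigl(\Phi(1-\Phi)+\Psi(1-\Psi)+\tfrac12H^2\bigr)$, with constant $\frac{A^2}{2}$, not $\frac{A^2}{4}$. For $\varphi,\psi$ uniform on $[0,1-\epsilon]$ and $[\epsilon,1]$, the left side is $\sim A^2/6$ while the right side with $\frac{A^2}{4}$ is $\sim A^2/12$. Relatedly, the quantity to study is $2A^2\Gini(q)-4\int G(A-G)$, not $A^2\Gini(q)-4\int G(A-G)$. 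The same example locates the sharp regime: $A\to0$ with both densities nearly uniform, i.e.\ approaching your excluded corner $t=1$, $s=0$. There the correct constant gives equality to leading order, so any comparison you design must be exact there. By contrast, at $t\to0$, $s\to1$ one has $\Gini(q)\to\frac12$ and $\Gini(p)\to\frac13$, far from tight.

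\textbf{The missing idea: reduce to a few moments instead of extreme points.} The paper does not compare $\Gini(q)$ and $\Gini(p)$ directly. Writing $X=TU\sim\varphi$ and $Y=1-SV\sim\psi$, it sandwiches both by the single scalar $B=1-A+\tfrac32A^2+\Var T+\Var S$.

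For the lower bound it uses the exact kernel
$\E|tU-sU'|=\frac{t+s}{6}+\frac{(t-s)^2}{3\max(t,s)}\ge\frac{t+s}{6}+\frac{(t-s)^2}{3}$.
This is integrated against the product law of $(T,T')$, which is linear, so no extreme-point reduction is needed. It gives $\Gini(\varphi)\ge\frac{\E T}{3}+\frac23\Var T$ and the analogue for $\psi$. Together with $\int H^2\ge A^2$ this yields $\Gini(q)\ge B/3$.

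For the upper bound it uses $\Gini(p)=4\Cov(Z,P(Z))\le\frac{2}{\sqrt3}\sqrt{\Var Z}$. Here $\Var Z$ is explicit in the moments of $T,S$ through $\E Z^m=\frac{\E Y^{m+1}-\E X^{m+1}}{(m+1)A}$. Cauchy--Schwarz for $\E[T(1-T)^2]$ and $\E[S(1-S)^2]$, followed by minimizing a convex quadratic, gives $12\Var Z\le B^2$, i.e.\ $\Gini(p)\le B/3$.

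Some device of this kind, which makes the comparison insensitive to the nonlinearity in $(\mu,\nu)$, is what your plan would need.
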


 Since $\Phi$ is concave and $\Psi$ is convex, $\Phi(x)\ge x \ge \Psi(x)$.
 Thus $H\ge0$, and $p$ is indeed a probability density.

\begin{rem}
There is also a geometric reformulation.  Let $x(u)=\Phi^{-1}(u)$ and $y(u)=\Psi^{-1}(u)$. If $U\sim\Unif[0,1]$, then $ x(U)\sim \varphi,\,\,  y(U) \sim \psi$. Consider
$$
    L=\{(u,z):0\leq u\leq 1,\ x(u)\leq z\leq y(u)\}.
$$
Since $x$ is convex and $y$ is concave, $L$ is convex. Choosing one of the two endpoints
of the (uniformly distributed) vertical slice $[x(U),y(U)]$ with equal probability gives the law $q$.
Schematically,
$$
 q(z)dz=\frac12\int_0^1\bigl(\delta_{x(u)}+\delta_{y(u)}\bigr)(dz)\,du.
$$
 On the other hand, the length of the horizontal section of $L$ at height $z$ is
    $$\bigl|\{u:x(u)\leq z\leq y(u)\}\bigr|
    =\Phi(z)-\Psi(z)=H(z). $$
Consequently, $p$ is the second-coordinate distribution of a uniform
point of $L$, that is, choose a slice with a probability proportional to its length and then a random point in it. Explicitly, 
$$
 p(z)\,dz
 =\frac{\displaystyle\int_0^1
 \mathbf 1_{\{x(u)\le z\le y(u)\}}\,du}
 {\displaystyle\int_0^1(y(u)-x(u))\,du}\,dz.
$$ 
\end{rem}

Every decreasing probability density on $[0,1]$ is a mixture of uniform densities on intervals $[0,t]$, as shown by the following Proposition. This is a one-sided form of Khintchine’s representation theorem for unimodal distributions, see \cite{Khintchine}.

\begin{prop} \label{prop:monotone_factoriz}
Let $\varphi$ be a probability density on $[0,1]$ admitting a nonincreasing representative, which we also denote by $\varphi$. There is a random variable $T\in[0,1]$ such that, for $U\sim\Unif[0,1]$ independent of $T$, the variable
$$
 X=TU
$$
has density $\varphi$. Likewise, for every nondecreasing density $\psi$, there is $S\in[0,1]$ such that
$$
 Y=1-SV,
 \qquad V\sim\Unif[0,1],
$$
has density $\psi$.
\end{prop}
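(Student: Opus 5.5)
The plan is to write $\varphi$ explicitly as a mixture of uniform densities and read off the mixing law from $\varphi$ itself. Take the right-continuous nonincreasing representative of $\varphi$ on $[0,1)$ and set $\varphi(1):=\lim_{s\uparrow1}\varphi(s)\ge0$. Because $\varphi$ is nonincreasing and nonnegative, it can be written as a layer-cake integral against the decreasing indicators:
$$
\varphi(s)=\varphi(1)\,\mathbf 1_{[0,1]}(s)+\int_{(s,1)}d\mu(t),
$$
where $\mu=-d\varphi$ is the (nonnegative) Lebesgue--Stieltjes measure on $(0,1)$. Writing $\mathbf 1_{[0,t]}(s)=t\cdot\frac{1}{t}\mathbf 1_{[0,t]}(s)$, this becomes
$$
\varphi(s)=\int_{(0,1]}\frac{1}{t}\mathbf 1_{[0,t]}(s)\,d\nu(t),\qquad d\nu(t)=t\,d\mu(t)\ \text{on }(0,1),\quad \nu(\{1\})=\varphi(1).
$$

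Next I will check that $\nu$ is a probability measure, so that it can serve as the law of $T$. Integrating the last identity in $s$ over $[0,1]$ and applying Tonelli gives $1=\int_0^1\varphi=\nu((0,1])$. Now let $T\sim\nu$ be independent of $U\sim\Unif[0,1]$. For each $t\in(0,1]$ the conditional density of $TU$ given $T=t$ is $\frac1t\mathbf 1_{[0,t]}$. Averaging over $t$ with respect to $\nu$ therefore gives density $\varphi$, again by Tonelli. In other words, for every Borel set $B$,
$$
P(TU\in B)=\int\!\!\int_B \tfrac1t\mathbf 1_{[0,t]}(s)\,ds\,d\nu(t)=\int_B\varphi .
$$
An equivalent shortcut: $P(T>s)$ is essentially $\varphi(s)-s\varphi'(s)$ in the smooth case, i.e.\ $1-F_T(s)=\Phi(s)$-related data. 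This can serve as a sanity check but is not needed.

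The main obstacle is the technical point of whether $\varphi$ can be infinite or unbounded near $0$. In that case $\mu$ may have infinite mass near $0$, but $\nu=t\,d\mu$ still has finite total mass. This follows from the Tonelli computation above, which is legitimate because every integrand in it is nonnegative, so no finiteness assumption is required. We also need $\nu$ to give no mass at $t=0$, and this holds because $\mu$ lives on $(0,1)$. A second small point is the boundary atom $\varphi(1)$, which the formula for $\nu$ handles explicitly.

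For the nondecreasing case, apply the first part to $\tilde\varphi(s)=\psi(1-s)$, which is a nonincreasing probability density. This produces $S$ with $SV\sim\tilde\varphi$, and then $Y=1-SV$ has density $\tilde\varphi(1-y)=\psi(y)$.
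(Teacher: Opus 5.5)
Your proof is correct and is essentially the paper's: your $\nu=t\,d\mu+\varphi(1-)\delta_1$ with $\mu=-d\varphi$ is exactly the paper's Stieltjes mixing measure $\rho$. Your Tonelli computation of $\nu((0,1])=\int_0^1\varphi=1$ replaces the paper's Stieltjes integration by parts, and your reflection $\psi(1-\cdot)$ spells out the nondecreasing case, which the paper leaves implicit. The ``shortcut'' aside is wrong as stated, though you don't use it: in the smooth case $P(T>s)=1-\Phi(s)+s\varphi(s)$, i.e.\ $F_T(s)=\Phi(s)-s\varphi(s)$, not $\varphi(s)-s\varphi'(s)$.
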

    \begin{proof}
        
     As the law of $T$ one may take the Stieltjes mixing measure 
$$
 \rho(dt)=-t\,d\varphi(t)+\varphi(1-)\delta_1(dt),
$$
for which
$$
 \varphi(x)=\int_{[x,1]}\frac1t\,\rho(dt) \quad \text{for a.e. } x\in (0,1) .
$$ 
As $\varphi$ is nonincreasing, $\rho$ is nonnegative and the Stieltjes integration by parts gives $\rho ([0,1]) = \int_0^1 \varphi = 1.$
\end{proof}

Next lemma gives a lower bound on the Gini mean difference of $q$ in terms of its mean and auxiliary variables.

\begin{lem}\label{lem:low_bound_enpdpoint}
    In the notation of Theorem~\ref{thm:endpoint} and Proposition~\ref{prop:monotone_factoriz}, denote 
    $$ A=\int_0^1(\Phi-\Psi)
   =\E Y-\E X
   =1-\frac{\E T+\E S}{2}.$$
   Then $\Gini(q) \geq \frac{1}{3} B := \frac{1}{3}(1-A+\frac{3}{2}A^2 + \Var(T) + \Var(S)).$
\end{lem}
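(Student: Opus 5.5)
The plan is to use the mixture structure of $q$ together with the factorizations of Proposition~\ref{prop:monotone_factoriz}, and to reduce $\Gini(q)$ to explicit expectations in the independent variables $T,T',S,S',U,U',V,V'$. By the Proposition, $q$ is the law of $Z$, where $Z=X=TU$ or $Z=Y=1-SV$ with probability $\tfrac12$ each. Let $X'$ and $Y'$ be independent copies. Then
$$
 \Gini(q)=\tfrac14\,\E|X-X'|+\tfrac14\,\E|Y-Y'|+\tfrac12\,\E|X-Y|.
$$
First I would record the identity $A=\E Y-\E X=1-\tfrac{\E T+\E S}{2}$. It holds because $\int_0^1\Phi=1-\E X$, $\int_0^1\Psi=1-\E Y$, $\E X=\E T/2$ and $\E Y=1-\E S/2$.

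Next I would treat the diagonal terms exactly, by conditioning on $T,T'$. For $a\ge b$, averaging $\E|aU-c|=\frac{c^2+(a-c)^2}{2a}$ over $c=bU'$ gives
$$
 \E|aU-bU'|=\frac a2-\frac b2+\frac{b^2}{3a}=:f(a,b).
$$
Since $\max(a,b)\le1$, the elementary bound $f(a,b)\ge \frac{a+b}{6}+\frac{(a-b)^2}{3}$ holds. Its difference is a nonnegative quadratic expression in $b$ when $a\le 1$. Equality holds at $a=b$ and at $(1,0)$. Taking expectations gives
$$
 \E|X-X'|\ge \tfrac13\E T+\tfrac23\Var(T),
$$
and the same bound holds for $Y$ with $S$ in place of $T$.

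The main obstacle is the cross term $\E|X-Y|$. The crude bound $\E|X-Y|\ge \E(Y-X)=A$ is not enough. With it the total falls short of $\tfrac13 B$ by $\tfrac12(1-A)(A-\tfrac13)$ when $A>\tfrac13$, and by $\tfrac16(\Var T+\Var S)$ in the variance terms. So the cross term must contribute a quadratic gain. I would write
$$
 \E|X-Y|=A+2\,\E(X-Y)_+
$$
and compute $\E(TU+SV-1)_+$ conditionally on $(T,S)=(t,s)$. This is the expected positive part of a sum of two independent uniforms on $[0,t]$ and $[0,s]$. It has the explicit piecewise-cubic form $\frac{(t+s-1)_+^3}{6ts}$ when $\max(t,s)\le1$.

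The remaining task is then a purely two-variable inequality. It has to convert $\E\frac{(T+S-1)_+^3}{3TS}$, plus the slack left over in the diagonal estimates, into $\tfrac12(1-A)^2$-type and variance-type terms. Here I would try convexity (Jensen) in the joint variable $T+S$. If this route proves too lossy, the fallback is to work instead with the representation
$$
 \Gini(q)=2\int_0^1 F(1-F),\qquad F=\tfrac12(\Phi+\Psi),
$$
and expand $F(1-F)$ using
$$
 \Phi(x)=\E\min\bigl(\tfrac xT,1\bigr),\qquad 1-\Psi(x)=\E\min\bigl(\tfrac{1-x}{S},1\bigr).
$$
The cross term $\Phi(1-\Psi)$ then produces the product structure in $T$ and $S$ directly, and the target $\tfrac13 B$ would be checked term by term after integrating in $x$.
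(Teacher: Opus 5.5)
Your treatment of the diagonal terms is correct and is exactly the paper's estimate. Your formulas $\E|X-Y|=A+2\E(X-Y)_+$ and $\E[(X-Y)_+\mid T,S]=\frac{(T+S-1)_+^3}{6TS}$ are also right. The gap is that the proposal stops where the proof has to happen, and the route you propose for the cross term cannot succeed. After inserting the diagonal bound, what is left to prove is
\[
\E\frac{(T+S-1)_+^3}{TS}\ \ge\ (1-A)(1-3A)+\Var T+\Var S .
\]
Incidentally, the $A$-part of the deficit of the crude bound is $\frac16(1-A)(1-3A)$, which is positive for $A<\frac13$, not for $A>\frac13$. The right-hand side grows with the variances at fixed means, while a Jensen argument in $T+S$ only sees $\E(T+S)$. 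Concretely, let $T,S$ be independent, each equal to $\epsilon$ or $1$ with probability $\frac12$. Every pair then has $t=s$ or $\max(t,s)=1$, so your diagonal bound is exact and there is no slack to use. The left side is $\frac14+\frac{\epsilon^2}{2}$ and the right side is $\frac14-\frac{\epsilon}{2}+\frac{5\epsilon^2}{4}$. Since $ts\le (t+s)^2/4$, the largest function of $w=t+s$ lying below the integrand is $4(w-1)_+^3/w^2$. Jensen at $\E(T+S)=1+\epsilon$ therefore yields only $O(\epsilon^3)$.

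The missing idea is the identity $2\E|X-Y|-\Gini(X)-\Gini(Y)=2\int_0^1(\Phi-\Psi)^2$, which turns the cross term back into diagonal terms:
\[
\Gini(q)=\frac{\Gini(X)+\Gini(Y)}{2}+\frac12\int_0^1H^2 .
\]
Your fallback would in fact arrive here, because $F(1-F)=\frac12\bigl[\Phi(1-\Phi)+\Psi(1-\Psi)\bigr]+\frac14(\Phi-\Psi)^2$ for $F=\frac12(\Phi+\Psi)$. However, a term-by-term comparison with $\frac13B$ cannot finish it: $\frac13B$ involves only the first and second moments of $T$ and $S$, while $\int H^2$ depends on their full laws. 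What closes the argument is Cauchy--Schwarz, $\int_0^1H^2\ge\bigl(\int_0^1H\bigr)^2=A^2$, together with the fact that the diagonal terms now carry weight $\frac12$ rather than $\frac14$. Your bounds $\Gini(X)\ge\frac13\E T+\frac23\Var T$ and $\Gini(Y)\ge\frac13\E S+\frac23\Var S$ then give $\Gini(q)\ge\frac{1-A}{3}+\frac13(\Var T+\Var S)+\frac12A^2=\frac13B$ at once. No analysis of $\E(X-Y)_+$ is needed.
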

\begin{proof}

Take $U, U'\sim \Unif[0,1]$ independent. Then for $s,t\in[0,1]$, direct integration gives

$$ \E|tU-sU'|
 =\frac{t+s}{6}
 +\frac{(t-s)^2}{3\max(t,s)},$$

with the second term read as zero when $s=t=0$. Since $\max(s,t)\le1$,
$$
 \E|tU-sU'|
 \ge \frac{t+s}{6}+\frac{(t-s)^2}{3}.
$$
Now  invoke Proposition~\ref{prop:monotone_factoriz}:
$$\Gini(X) = \E|TU-T'U'| = \E_{T,T'}\E(|TU-T'U'| \, |\,T, T') \geq \E(\frac{T+T'}{6}+\frac{(T-T')^2}{3}).$$
Using that $T$ and $T'$ are independent and applying the same argument to $Y$ one obtains
\begin{equation}\label{eq:Dmunu}
 \Gini(X)\ge\frac{\E T}{3}+\frac23\Var T,
 \qquad
 \Gini(Y)\ge\frac{\E S}{3}+\frac23\Var S.
\end{equation}

For independent laws on the line with finite first moments with distribution functions $\Phi,\Psi$,
$$ 2\E|X-Y|-\Gini(X)-\Gini(Y)
 =2\int_{\R}(\Phi-\Psi)^2.$$

This follows immediately from the standard CDF formulas for the three terms. Since $q$ is an equal-probability mixture of $X$ and $Y$, we have 
\begin{equation}\label{eq:Dqidentity}
 \Gini(q)
 =\frac{\Gini(X)+\Gini(Y)}2
  +\frac12\int_0^1H(x)^2\,dx.
\end{equation}
By Cauchy--Schwarz, $\int H^2\ge A^2$. Finally, using \eqref{eq:Dmunu}, and \eqref{eq:Dqidentity}, we obtain
\begin{equation}\label{eq:Dqlower}
 \Gini(q)\ge\frac{1}{6} (\E T + \E S + 2 \Var T + 2 \Var S) + \frac{1}{2}A^2 = \frac{1}{3}B.
\end{equation}
\end{proof}

\begin{lem} \label{lem:upper_bound} Let $p$ be as in Theorem~\ref{thm:endpoint}, let $Z$ have density $p$, and assume $B$ as in Lemma~\ref{lem:low_bound_enpdpoint}. Then
$$ 12\Var Z\le B^2.$$

\end{lem}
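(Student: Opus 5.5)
The plan is to write $\Var Z$ explicitly in terms of the moments of $T$ and $S$ from Proposition~\ref{prop:monotone_factoriz}, and then reduce $12\Var Z\le B^2$ to an elementary inequality in finitely many real parameters.

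\emph{Moments of $Z$.} Integrating by parts with $\Phi(0)=\Psi(0)=0$ and $\Phi(1)=\Psi(1)=1$ gives
$$
\int_0^1 z^k H(z)\,dz=\frac{\E Y^{k+1}-\E X^{k+1}}{k+1}.
$$
Taking $k=0,1,2$ yields $A=\E Y-\E X$ (as already stated in Lemma~\ref{lem:low_bound_enpdpoint}), together with
$$
\E Z=\frac{\E Y^2-\E X^2}{2A},
\qquad
\E Z^2=\frac{\E Y^3-\E X^3}{3A}.
$$

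\emph{Moments of $X$ and $Y$.} Since $X=TU$ and $Y=1-SV$ with $U,V$ uniform and independent of $T,S$, we get
$$
\E X^j=\frac{\E T^j}{j+1},
\qquad
\E Y^2=1-\E S+\tfrac13\E S^2,
\qquad
\E Y^3=1-\tfrac32\E S+\E S^2-\tfrac14\E S^3.
$$
Hence $12A^2\Var Z=4A(\E Y^3-\E X^3)-3(\E Y^2-\E X^2)^2$ is a polynomial in $\E T,\E T^2,\E T^3,\E S,\E S^2,\E S^3$. The claim is equivalent to $4A(\E Y^3-\E X^3)-3(\E Y^2-\E X^2)^2\le A^2B^2$.

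\emph{Eliminating third moments.} The third moments enter only through $-\E T^3$ and $-\E S^3$, each with positive coefficient $A/3$. So it suffices to replace them by lower bounds. I would use $\E T^3\ge (\E T^2)^2/\E T$ (Cauchy--Schwarz applied to $T^{1/2}\cdot T^{3/2}$), and similarly for $S$. If that proves too weak, I would fall back on a finer extremal argument: for fixed $\E T$ and $\E T^2$ on $[0,1]$, the minimum of $\E T^3$ is attained at a two-point law with one atom at $0$. This leaves four parameters $m_T=\E T$, $v_T=\Var T$, $m_S$, $v_S$, subject to $0\le v\le m(1-m)$ and $A=1-\frac{m_T+m_S}{2}>0$.

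\emph{Main obstacle.} The hardest step is verifying the resulting polynomial inequality on this domain. I would first check the extremal configurations. When $T\equiv S\equiv 0$, $L$ degenerates to a full square, $Z$ is uniform on $[0,1]$, and $A=B=1$, so $12\Var Z=1=B^2$ and the inequality is sharp there. The case $T\equiv S\equiv 1$ should be checked as well. Next I would exploit the structure of the bound. The right-hand side $B^2$ is increasing in $v_T$ and $v_S$, while the reduced left-hand side should be concave, or at least controlled, in these variances. This should let me reduce to the boundary cases $v=0$ and $v=m(1-m)$, or show monotonicity directly. After that reduction the inequality involves only $m_T$ and $m_S$, and is symmetric under swapping them. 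Writing $m_T+m_S=2(1-A)$ and $m_T-m_S=\delta$, I expect it to become a polynomial inequality in $(A,\delta)$ that can be settled by expanding and factoring, for instance by exhibiting it as a sum of nonnegative terms on $0<A\le1$.

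If the Cauchy--Schwarz lower bound for the third moment leaves a gap, I would instead retain $\E T^3$ and argue by linearity. The target inequality is linear in the law of $T$ once $A$ and the second moments are fixed, so the extreme laws are supported on at most three points. This reduces the verification to finitely many parametrized cases.
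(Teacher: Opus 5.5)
Your setup matches the paper's through the removal of third moments. This covers the identity $\int_0^1 z^kH=\frac{\E Y^{k+1}-\E X^{k+1}}{k+1}$ and the moments of $X$ and $Y$. It also covers your bound $\E T^3\ge(\E T^2)^2/\E T$, which after expansion is exactly the paper's $(\E[T(1-T)])^2\le\E T\,\E[T(1-T)^2]$. (The coefficient of $-\E T^3$ in $4A(\E Y^3-\E X^3)$ is $A$, not $A/3$; this is harmless.) The bound is sharp, with equality for laws on $\{0,\E T^2/\E T\}$. So your ``finer extremal argument'' and the three-point fallback cannot gain anything. What remains is exactly the four-parameter inequality $1-\Lambda\le B^2$, where $\alpha=\E[T(1-T)]$, $\beta=\E[S(1-S)]$, $z=\frac{\E T-\E S}{2}$, and
\[
\Lambda=\frac{\alpha^2}{A\,\E T}+\frac{\beta^2}{A\,\E S}+\frac{(z+\alpha-\beta)^2}{3A^2},
\qquad
B=1+A-\tfrac12A^2-2z^2-\alpha-\beta .
\]

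The genuine gap is your plan for this inequality. Since $\alpha=\E T(1-\E T)-\Var T$, the difference $B^2+\Lambda-1$ is a convex quadratic in $(\Var T,\Var S)$. The reasoning ``left side concave, right side convex'' points the wrong way: reducing to the endpoints $\Var T\in\{0,\E T(1-\E T)\}$ would need the difference to be concave or monotone, and it is neither.

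Concretely, take $A=\frac12$, $z=0$, and restrict to the diagonal $\alpha=\beta=x\in[0,\frac14]$. The difference is then $(\frac{11}{8}-2x)^2+8x^2-1$. Its minimum $\frac{25}{96}$ is attained at the interior point $x=\frac{11}{48}$, below both endpoint values $\frac{57}{64}$ and $\frac{17}{64}$. So boundary checks prove nothing about the interior.

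Your sanity check is also wrong. At $T\equiv S\equiv0$ one has $A=1$ but $B=\frac32$, so there is slack there. The bound is only asymptotically tight as $A\to0$, $B\to1$.

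This tightness at $B=1$ is exactly what the paper exploits:
\begin{itemize}
\item The tangent-line estimate $B^2\ge2B-1$ reduces the claim to $\Lambda\ge2(1-B)$, again a convex quadratic in $(\alpha,\beta)$.
\item This quadratic is minimized over all of $\R^2$ in closed form, giving $\frac{A^3(A+2)-(4A-1)z^2}{A(A+2)}$.
\item The result is nonnegative by $z^2\le\min\{A^2,(1-A)^2\}$, after splitting into $A\le\frac14$, $\frac14\le A\le\frac12$, $A\ge\frac12$.
\end{itemize}
Some such interior minimization, with or without the linearization, is the step your proposal is missing.
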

If $P$ denotes the CDF of $Z$, then  by the Gini covariance identity

$$ \Gini(p)=4\Cov(Z,P(Z)).$$
Since $p$ is absolutely continuous, $P(Z)$ is uniform on $[0,1]$. Consequently, using Cauchy--Schwarz again,

$$ \Gini(p) \le 4 \sqrt{\Var(Z) \Var(P(Z)} = 4\sqrt{\Var(Z) \frac{1}{12}}
 = \frac2{\sqrt3}\sqrt{\Var Z}.$$

Hence with \eqref{eq:Dqlower}, Lemma~\ref{lem:upper_bound} finishes the proof of Theorem~\ref{thm:endpoint}.

\begin{proof}[Proof of Lemma~\ref{lem:upper_bound}]
This part is rather technical, though quite elementary. For every integer $m\ge0$, Fubini's theorem gives
$$
 \E Z^m
 =\frac{\E Y^{m+1}-\E X^{m+1}}{(m+1)A}.
$$
Indeed, 
$$
 \int_0^1x^m\Phi(x)\,dx = \int_0^1x^m\E\mathbf 1_{\{X\leq x\}}\,dx
 =\E\int_X^1x^m\,dx
 =\frac{1-\E X^{m+1}}{m+1},
$$

and since $p=(\Phi-\Psi)/A$, the analogous identity for $\Psi$ gives the claim after subtraction. What follows is mostly algebraic identities involving $T$ and $S$.

 Recall that $X = TU$, and $Y = 1 -SV$, and introduce
$$
 z=\frac{\E T-\E S}{2},\quad
 e=\E[T(1-T)]+\E[S(1-S)],\quad
 k=\E[T(1-T)]-\E[S(1-S)].
$$

Using the general moment formula with $m=1$, and using
$$
 \E X^2=\frac{\E[T^2]}{3},
 \qquad
 \E Y^2
 =1-\E S+\frac{\E[S^2]}{3},
$$
we obtain
$$
 \E Z
 =
 \frac{
  1-\E S+\frac13\bigl(\E[S^2]-\E[T^2]\bigr)
 }{2A} =\frac12+\frac{z+k}{6A}.
$$

Similarly, using the moment formula with $m=2$,
$$
 \E X^3=\frac{\E[T^3]}4,
$$
while
$$
 \E Y^3
 =
 \E(1-SV)^3
 =
 1-\frac32\E S+\E[S^2]-\frac14\E[S^3].
$$
Hence
$$
 \E Z^2
 =
 \frac{
  1-\frac32\E S+\E[S^2]
  -\frac14\bigl(\E[S^3]+\E[T^3]\bigr)
 }{3A}.
$$
The numerator can be regrouped as
$$
 1-\frac32\E S+\E[S^2]
  -\frac14\bigl(\E[S^3]+\E[T^3]\bigr) =
 A+\frac{z+k}{2}
 -\frac14\left(
   \E[T(1-T)^2]+\E[S(1-S)^2]
 \right). $$
Consequently,
$$
 \E Z^2
 =
 \frac13+\frac{z+k}{6A}
 -\frac{
   \E[T(1-T)^2]+\E[S(1-S)^2]
 }{12A}.
$$

Subtracting $(\E Z)^2$ gives
$$ 12\Var Z =
 12\E Z^2-12(\E Z)^2 = 1 -\frac{\E[T(1-T)^2]+\E[S(1-S)^2]}{A} -\frac{(z+k)^2}{3A^2}.$$

We now estimate the two expectations which occur with a minus sign.
By Cauchy--Schwarz,
$$
 \bigl(\E[T(1-T)]\bigr)^2
 \le
 \E T\;\E[T(1-T)^2].
$$
Since
$$
 \E[T(1-T)]=\frac{e+k}{2},
 \qquad
 \E T=1-A+z,
$$
this gives
$$
 \E[T(1-T)^2]
 \ge
 \frac{(e+k)^2}{4(1-A+z)}.
$$
Likewise,
$$
 \E[S(1-S)^2]
 \ge
 \frac{(e-k)^2}{4(1-A-z)}.
$$
Substitution into the preceding formula yields
$$
 12\Var Z\le 1-\Lambda,
$$
where
$$
 \Lambda=
 \frac{(e+k)^2}{4A(1-A+z)}
 +\frac{(e-k)^2}{4A(1-A-z)}
 +\frac{(z+k)^2}{3A^2}.
$$

We claim that
$$
 \Lambda\ge2(1-B),
$$
where $B=1-A+\frac{3}{2}A^2 + \Var(T) + \Var(S) = 1+A-\frac{1}{2}A^2-2z^2-e$.
\par Once this is proved, the result follows immediately. Indeed, $B\ge0$ from its original definition, and therefore
$$
 12\Var Z\le1-\Lambda\le2B-1\le B^2,
$$

It remains to prove the claim. Since
$\E T=1-A+z$ and $\E S=1-A-z$ lie in $[0,1]$,
$$
 0\le z^2\le\min\{A^2,(1-A)^2\}.
$$
Using the latter formula for $B$, we obtain

$$ \Lambda-2(1-B)
 ={}\frac{(e+k)^2}{4A(1-A+z)}
 +\frac{(e-k)^2}{4A(1-A-z)}
 +\frac{(z+k)^2}{3A^2}
 +2A-A^2-4z^2-2e.$$

This is a convex quadratic polynomial in $k$ and $e$, which we estimate from below in a standard way. Recall $1-A+z = \E T$, $1-A-z=\E S$ and, to simplify the analysis, introduce new variables $e=x+y$, $k=x-y$. Then
$$\Lambda-2(1-B) = F(x,y) = \frac{x^2}{A \E T} + \frac{y^2}{A \E S} + \frac{(z+x-y)^2}{3A^2} + 2A-A^2-4z^2-2x-2y.$$
Minimizing that in $x,y$ gives
$$F(x,y) \geq \frac{A^3(A+2)-(4A-1)z^2}{A(A+2)}.$$

%
%
%
%




Thus it only remains to verify that the numerator is nonnegative.

If $0<A\le\frac14$, then $4A-1\le0$, so $$A^3(A+2)-(4A-1)z^2\ge A^3(A+2) \geq 0.$$ If
$\frac14\le A\le\frac12$, then $z^2\le A^2$ and
$$
 A^3(A+2)-(4A-1)z^2\ge A^3(A+2)-(4A-1)A^2
 =A^2(1-A)^2\ge0.
$$
Finally, if $\frac12\le A<1$, then $z^2\le(1-A)^2$ and
$$
 \begin{aligned}
 A^3(A+2)-(4A-1)z^2&\ge A^3(A+2)-(4A-1)(1-A)^2\\
  &=(A^2-A)^2+(4A-1)(2A-1)\ge0.
 \end{aligned}
$$
Therefore $\Lambda-2(1-B)\ge0$, and hence $12\Var Z\le B^2$.

\end{proof}
The analogous result for the second moment is immediate.
\begin{cor}\label{cor:sec_mom}
    In the setting of Theorem~\ref{thm:endpoint}, let $Q$ be a random variable with density $q$, and let $Z$, as before, have density $p$. Then  $\Var(Q) \geq \Var(Z)$.
\end{cor}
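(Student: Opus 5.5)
The plan is to compute $\Var(Q)$ exactly in terms of the mixing variables $T,S$ from Proposition~\ref{prop:monotone_factoriz}, and then compare it with the bound obtained inside the proof of Lemma~\ref{lem:upper_bound}. That proof gives the intermediate inequality
$$12\Var Z\le 1-\Lambda\le 2B-1,$$
which is stronger than $12\Var Z\le B^2$. So it suffices to show $12\Var Q\ge 2B-1$, where $B=1-A+\tfrac32A^2+\Var T+\Var S$ as in Lemma~\ref{lem:low_bound_enpdpoint}.

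First, $Q$ is an equal-probability mixture of $X=TU$ and $Y=1-SV$. The law of total variance therefore gives
$$\Var Q=\tfrac12(\Var X+\Var Y)+\tfrac14(\E Y-\E X)^2=\tfrac12(\Var X+\Var Y)+\tfrac14A^2.$$
Next, using $\E X=\E T/2$ and $\E X^2=\E T^2/3$, we get $\Var X=\tfrac13\E T^2-\tfrac14(\E T)^2$, and symmetrically $\Var Y=\tfrac13\E S^2-\tfrac14(\E S)^2$.

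Writing $\E T^2=\Var T+(\E T)^2$ (and likewise for $S$), this becomes
$$12\Var Q=2\Var T+2\Var S+\tfrac12\bigl((\E T)^2+(\E S)^2\bigr)+3A^2.$$
Then substitute $\E T=1-A+z$ and $\E S=1-A-z$, with $z$ as in the proof of Lemma~\ref{lem:upper_bound}. This gives
$$12\Var Q=2\Var T+2\Var S+(1-A)^2+z^2+3A^2=2\Var T+2\Var S+1-2A+4A^2+z^2.$$

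On the other hand,
$$2B-1=1-2A+3A^2+2\Var T+2\Var S.$$
Hence $12\Var Q-(2B-1)=A^2+z^2\ge0$, and chaining the inequalities yields $12\Var Q\ge 2B-1\ge 1-\Lambda\ge 12\Var Z$.

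I do not expect any real obstacle. The only care needed is to cite the intermediate inequality $12\Var Z\le 1-\Lambda\le 2B-1$ from the proof of Lemma~\ref{lem:upper_bound}, rather than its final statement $12\Var Z\le B^2$, which is too weak here since $B^2\ge 2B-1$.
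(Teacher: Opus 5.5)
Your proof is correct, but it takes a different route from the paper's.

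The paper never computes $\Var Q$. It chains the lower bound $\Gini(q)\ge \frac13 B$ of Lemma~\ref{lem:low_bound_enpdpoint} with the Gini--covariance estimate $\Gini(q)\le \frac{2}{\sqrt3}\sqrt{\Var Q}$ to get $12\Var Q\ge B^2$. It then invokes the \emph{stated} conclusion $12\Var Z\le B^2$ of Lemma~\ref{lem:upper_bound}.

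You instead obtain the exact identity $12\Var Q=2B-1+A^2+z^2$ from the law of total variance. The moment algebra checks out, including $\E Y-\E X=A$, $\E T=1-A+z$ and $\E S=1-A-z$. You then compare this with the sharper bound $12\Var Z\le 1-\Lambda\le 2B-1$, which is established inside the proof of Lemma~\ref{lem:upper_bound}.

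What each route buys:
\begin{itemize}
\item The paper's version uses the two lemmas purely as black boxes. The price is passing through $\Gini(q)$, which loses information at several points: $\max(s,t)\le 1$, $\int H^2\ge A^2$, and the Cauchy--Schwarz step in the Gini--variance bound.
\item Your version bypasses Lemma~\ref{lem:low_bound_enpdpoint} and the Gini covariance identity entirely. It also gives a quantitative and strict conclusion, $12(\Var Q-\Var Z)\ge A^2+z^2\ge A^2>0$ since $A>0$, whereas the paper's chain yields only $\Var Q\ge \Var Z$.
\end{itemize}
The only cost is that you rely on a step internal to a proof rather than on a stated result, which you correctly flag.
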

\begin{proof}
    Indeed, we have $\frac{1}{3}B \leq \Gini(q) \leq \frac{2}{\sqrt{3}} \sqrt{\Var(Q)}$, therefore $12 \Var(Q) \geq B^2$. At the same time $12 \Var(Z) \leq B^2$, by Lemma~\ref{lem:upper_bound}.
\end{proof}

\section{Proof of Theorem~\ref{thm:main}}

Assume for now that $K$ is smooth and strictly convex. Fix a direction, normalize its projection to $[0,1]$, and use the notation from the plan:
$$
 W=f-g,\qquad a=f',\qquad b=g'.
$$
The functions $a$ and $b$ are respectively nonincreasing and nondecreasing, while
$$
 W'=a-b,\qquad W(0)=W(1)=0.
$$
The normalized projected area density is
\begin{equation}\label{eq:area-density}
 p(x)=\frac{W(x)}{\mathcal A},
 \qquad
 \mathcal A=\int_0^1W(x)\,dx.
\end{equation}
The unnormalized projected arclength boundary density is
$$
 r(x)=\sqrt{1+a(x)^2}+\sqrt{1+b(x)^2}.
$$

Let
$$
R=\int_0^1r(x)\,dx,
$$
so that the normalized projected boundary density is
$
q(x)=\frac{r(x)}{R}.
$
Define
$$
\varphi(x)
=
\frac{
\sqrt{1+a(x)^2}+a(x)
+
\sqrt{1+b(x)^2}-b(x)
}{R}
$$
and
$$
\psi(x)
=
\frac{
\sqrt{1+a(x)^2}-a(x)
+
\sqrt{1+b(x)^2}+b(x)
}{R}.
$$
Clearly,
$$
q=\frac{\varphi+\psi}{2}.
$$
 Since $a$ is nonincreasing and $b$ is nondecreasing, it follows that
$\varphi$ is nonincreasing and $\psi$ is nondecreasing. Both are nonnegative. Moreover,
$$
\int_0^1(a-b)\,dx=W(1)-W(0)=0,
$$
and hence
$$
\int_0^1\varphi(x)\,dx
=
\frac{1}{R}\int_0^1\bigl(r(x)+a(x)-b(x)\bigr)\,dx
=1,
$$
and similarly
$$
\int_0^1\psi(x)\,dx=1.
$$
Thus $\varphi$ and $\psi$ are probability densities of the type appearing in
Theorem~\ref{thm:endpoint}.

Let $\Phi$ and $\Psi$ be their distribution functions. Then
$$
\Phi(x)-\Psi(x)
=
\frac{2}{R}\int_0^x(a(s)-b(s))\,ds
=
\frac{2W(x)}{R}.
$$
Consequently,
$$
\int_0^1\bigl(\Phi(x)-\Psi(x)\bigr)\,dx
=
\frac{2 \mathcal A}{R},
$$
and therefore
$$
\frac{\Phi(x)-\Psi(x)}
{\displaystyle\int_0^1(\Phi-\Psi)}
=
\frac{W(x)}{\mathcal A}
=
p(x).
$$
Since $q=(\varphi+\psi)/2$, Theorem~\ref{thm:endpoint} gives
$$
\Gini(q)\ge \Gini(p).
$$

This proves the directional inequality \eqref{eq:directional} for every smooth strictly convex planar body.

\par Let $K_n$ be smooth strictly convex bodies converging to $K$ in the
Hausdorff metric. Then the normalized area measures on $K_n$ converge
weakly to normalized area measure on $K$, see e.g.
\cite{Schneider}. Likewise, the normalized arclength measures on
$\partial K_n$ converge weakly to normalized arclength measure on
$\partial K$; this follows from the weak continuity of curvature
measures under Hausdorff convergence, see e.g.
\cite{HugSchneider,Schneider}. Consequently, their pushforwards under
$x\mapsto\langle x,u\rangle$ converge weakly as well. Since all these
measures are supported in a common compact interval, the Gini
functionals converge, and passing to the limit proves the result for
$K$.

\begin{rem}
One can avoid the approximation by keeping the vertical boundary faces as atoms at the endpoints of the projection interval. The limiting argument is used here only to keep the main exposition lighter.
\end{rem}
\begin{cor} \label{corol:conjecture_res}
For every planar convex body $K$,
$$
 \E |X_1-X_2|\le \E |B_1-B_2|,
$$
where $X_1,X_2$ are uniform in $K$ and $B_1,B_2$ are uniform on $\partial K$. 
\end{cor}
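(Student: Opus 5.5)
The plan is to pass from the directional inequality of Theorem~\ref{thm:main} to the full Euclidean distance by a Crofton-type averaging over directions. The key identity is that for every $v\in\R^2$,
$$
 |v|=\frac14\int_{S^1}|\langle v,u\rangle|\,du,
$$
with $du$ the arclength measure on the unit circle. Indeed, writing $v=|v|(\cos\alpha,\sin\alpha)$, the integral equals $|v|\int_0^{2\pi}|\cos(\theta-\alpha)|\,d\theta=4|v|$.

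First I apply this identity with $v=X_1-X_2$. Taking expectations and using Fubini (the integrand is nonnegative and bounded, since $K$ is compact) gives
$$
 \E|X_1-X_2|=\frac14\int_{S^1}\E|\langle X_1,u\rangle-\langle X_2,u\rangle|\,du=\frac14\int_{S^1}\Gini(p_u)\,du.
$$
Here I use that $\langle X_1,u\rangle$ and $\langle X_2,u\rangle$ are independent with common law $p_u$. The same computation with $B_1,B_2$ yields
$$
 \E|B_1-B_2|=\frac14\int_{S^1}\Gini(q_u)\,du,
$$
because the pushforward of normalized arclength on $\partial K$ under $x\mapsto\langle x,u\rangle$ is exactly $q_u$.

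Second, Theorem~\ref{thm:main} gives $\Gini(q_u)\ge\Gini(p_u)$ for every unit vector $u$. Integrating this pointwise inequality over $S^1$ gives the corollary.

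There is no real obstacle here. The only points to check are that the identity for $|v|$ holds with a constant independent of $v$, and that the theorem applies to every direction without exception. Both are clear: the constant is $4$ for all $v$, and Theorem~\ref{thm:main} is stated for an arbitrary unit vector $u$ and an arbitrary convex body, nonsmooth ones included, by the approximation argument. Measurability of $u\mapsto\Gini(p_u)$ and $u\mapsto\Gini(q_u)$ follows from their continuity in $u$.
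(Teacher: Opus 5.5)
Your proposal is correct and matches the paper's proof: the paper integrates the same identity, $|z|=\frac{\pi}{2}\int_{S^1}|\langle z,u\rangle|\,d\bar\sigma(u)$ with normalized angular measure $\bar\sigma$, against the two-point area and boundary laws. This is exactly your formula with constant $\frac14$ for unnormalized arclength, and the paper then applies Theorem~\ref{thm:main} in each direction $u$.
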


\begin{proof}
Indeed, for normalized angular measure $\bar\sigma$ on $S^1$,
\begin{equation}\label{eq:cosine}
 |z|=\frac\pi2\int_{S^1}|\langle z,u\rangle|\,d\bar\sigma(u).
\end{equation}
See e.g. \cite{Schneider}. Integrating \eqref{eq:cosine} against the two-point area and boundary distributions, and then using \eqref{eq:directional}, gives the result.
\end{proof}

\begin{cor}\label{cor:sec_mom_planar}
For every planar convex body $K$,
$$
\mathbb E|X_1-X_2|^2\leq \mathbb E|B_1-B_2|^2,
$$
where $X_1,X_2$ are uniform in $K$ and $B_1,B_2$ are uniform on $\partial K$.
\end{cor}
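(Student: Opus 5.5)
The plan mirrors Corollary~\ref{corol:conjecture_res}, with Gini mean differences replaced by variances and Theorem~\ref{thm:main} replaced by Corollary~\ref{cor:sec_mom}. For independent identically distributed random vectors $X_1,X_2$ in $\R^2$ we have
$$\E|X_1-X_2|^2=2\,\E|X_1-\E X_1|^2=2\operatorname{tr}\Cov(X_1).$$
Moreover, the trace of a covariance matrix is the sum of the variances of the projections onto any orthonormal basis. Equivalently, averaging over directions,
$$\E|X_1-X_2|^2=2\cdot 2\int_{S^1}\Var\langle X_1,u\rangle\,d\bar\sigma(u).$$
Hence it suffices to prove, for every unit vector $u$, that the variance of $q_u$ dominates the variance of $p_u$. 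Here $p_u,q_u$ are the projected area and arclength laws of Theorem~\ref{thm:main}.

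For the directional inequality, I first take $K$ smooth and strictly convex. I normalize the projection to $[0,1]$ by a translation and dilation; this scales both variances by the same factor, so the inequality is unaffected. I then repeat verbatim the construction in the proof of Theorem~\ref{thm:main}, defining $\varphi$ and $\psi$ from $a=f'$ and $b=g'$. That construction gives a nonincreasing density $\varphi$ and a nondecreasing density $\psi$ with $q_u=(\varphi+\psi)/2$ and $p_u=(\Phi-\Psi)/\int_0^1(\Phi-\Psi)$. The normalizing constant is positive because $K$ has nonempty interior. Corollary~\ref{cor:sec_mom} then gives $\Var(Q)\ge\Var(Z)$ directly. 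Taking the two directions $u=e_1,e_2$ and summing, or integrating over $u$, yields the corollary for smooth strictly convex bodies.

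For general $K$, I approximate by smooth strictly convex $K_n\to K$ in the Hausdorff metric, exactly as in the proof of Theorem~\ref{thm:main}. The normalized area measures and the normalized boundary measures converge weakly, and all of them are supported in a fixed compact set. Therefore the second moments $\E|X_1-X_2|^2$ and $\E|B_1-B_2|^2$, which are integrals of a continuous function against the product measures, converge to their limits. The non-strict inequality passes to the limit.

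I do not expect a real obstacle. The only points to check are that variance, unlike the Gini mean difference, scales quadratically under the normalization, which is harmless because both sides scale identically, and that weak convergence is enough to pass the second moments to the limit, which it is under uniformly bounded supports.
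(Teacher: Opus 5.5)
Your proposal is correct and follows essentially the same route as the paper. For smooth strictly convex $K$ you reuse the $\varphi,\psi$ construction from the proof of Theorem~\ref{thm:main} and apply Corollary~\ref{cor:sec_mom} to get $\Var\langle X_1,u\rangle\le\Var\langle B_1,u\rangle$ for each direction, aggregate via $\E|X_1-X_2|^2=4\int_{S^1}\Var\langle X_1,u\rangle\,d\bar\sigma(u)$ (or by summing over two orthogonal directions), and pass to general $K$ by Hausdorff approximation under uniformly bounded supports.
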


\begin{proof}
For a smooth strictly convex body, the proof of Theorem~\ref{thm:main}
shows that, for every $u\in S^1$, the projected area and boundary densities are
obtained from the same pair of decreasing and increasing densities as in
Theorem~\ref{thm:endpoint}. Corollary~\ref{cor:sec_mom}
gives
$$
\operatorname{Var}\langle X_1,u\rangle
\leq
\operatorname{Var}\langle B_1,u\rangle.
$$
Observe that
$$
\mathbb E\langle X_1-X_2,u\rangle^2
=
2\operatorname{Var}\langle X_1,u\rangle,
$$
and similarly
$$
\mathbb E\langle B_1-B_2,u\rangle^2
=
2\operatorname{Var}\langle B_1,u\rangle.
$$
Therefore
$$
\mathbb E\langle X_1-X_2,u\rangle^2
\leq
\mathbb E\langle B_1-B_2,u\rangle^2
$$
for every $u\in S^1$.

For normalized angular measure $\bar\sigma$ on $S^1$,
$$
|z|^2
=
2\int_{S^1}\langle z,u\rangle^2\,d\bar\sigma(u).
$$
Integrating the preceding directional inequality over $u$ therefore gives
$$
\mathbb E|X_1-X_2|^2
\leq
\mathbb E|B_1-B_2|^2.
$$

The general case follows by the same smooth approximation argument as in the proof
of Theorem~\ref{thm:main}, since variance is continuous under weak convergence
for measures supported in a common compact interval.
\end{proof}

\section{A long-body observation}
In this section, we construct a three-dimensional counterexample  to the Zaporozhets-Tarasov conjecture, see Example~\ref{ex:counter-ex}. This example is then used to construct counterexamples in higher dimensions, see Proposition~\ref{prop:counter-ex}. The
main observation, discussed in Remark~\ref{rmk:why}, is that in dimension
three the area and perimeter profiles of sections orthogonal to a fixed
direction can be controlled almost independently. We exploit this
freedom to make the projected Gini mean difference strictly larger for
the volume measure than for the surface measure. A sufficiently large
stretch in the projection direction then yields the required
counterexample. 

\begin{lem}\label{lem:axis}
Suppose a random point has the form $Z=(LT,U)\in\R\times\R^m$, where $T\in[0,1]$ and the support of $U$ has diameter at most $D$. If $Z_1,Z_2$ are independent copies, then
$$
L\E|T_1-T_2|\leq \E|Z_1-Z_2|\leq L\E|T_1-T_2|+D.
$$
\end{lem}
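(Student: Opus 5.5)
The plan is to compare the Euclidean norm of $Z_1-Z_2$ with its first coordinate directly, pointwise, and then take expectations. Write $Z_i=(LT_i,U_i)$ for independent copies, so that
$$
Z_1-Z_2=\bigl(L(T_1-T_2),\,U_1-U_2\bigr)\in\R\times\R^m ,
\qquad
|Z_1-Z_2|=\sqrt{L^2(T_1-T_2)^2+|U_1-U_2|^2}.
$$

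For the lower bound, the norm of a vector is at least the absolute value of any one coordinate. Hence pointwise
$$
|Z_1-Z_2|\ge L|T_1-T_2|,
$$
using $L\ge0$. Taking expectations gives $L\,\E|T_1-T_2|\le\E|Z_1-Z_2|$.

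For the upper bound, apply the triangle inequality to the orthogonal splitting $(L(T_1-T_2),0)+(0,U_1-U_2)$. Equivalently, use $\sqrt{\alpha^2+\beta^2}\le|\alpha|+|\beta|$. This gives
$$
|Z_1-Z_2|\le L|T_1-T_2|+|U_1-U_2|.
$$
Both $U_1$ and $U_2$ lie almost surely in the support of the law of $U$, whose diameter is at most $D$. So $|U_1-U_2|\le D$ almost surely. Taking expectations yields the right-hand inequality.

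There is no real obstacle here. The only points to check are the following:
\begin{itemize}
\item $L\ge0$, which is implicit since $L$ is a stretch factor.
\item All expectations are finite. This holds because $T\in[0,1]$ and $U$ is boundedly supported.
\item Independence of $Z_1,Z_2$ is not even needed for these pointwise bounds. It matters only in that $\E|T_1-T_2|$ is then the Gini mean difference of $T$, which is how the lemma will be used for long bodies.
\end{itemize}
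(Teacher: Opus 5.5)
Your proof is correct and matches the paper's argument: the lower bound comes from projecting onto the first coordinate, and the upper bound from $|(L(T_1-T_2),U_1-U_2)|\le L|T_1-T_2|+|U_1-U_2|$ together with $|U_1-U_2|\le D$ almost surely. Your side remarks (that $L\ge 0$ is implicit and that independence is not needed for the pointwise bounds) are accurate.
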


\begin{proof}
The lower bound follows by projection onto the first coordinate. For the upper bound, use
$$
|(L(T_1-T_2),U_1-U_2)|\leq L|T_1-T_2|+|U_1-U_2|.
$$
\end{proof}

Thus, after division by $L$, mean distances in a long body converge to mean absolute differences of the corresponding axial coordinates.

\begin{ex}\label{ex:counter-ex}
    For $L>0$ and $\varepsilon>0$, define
$$
K_{L,\varepsilon}:=\left\{(Lt,x,y):0\leq t\leq1,\quad |x|\leq t,\quad |y|\leq\varepsilon\left(1-\frac{2t}{3}\right)\right\}.
$$
This is the convex hull of $(0,0,\pm\varepsilon)$ and the four points $(L,\pm1,\pm\varepsilon/3)$.
\end{ex}

\begin{figure}[t]
\centering
\includegraphics[width=0.55\textwidth]{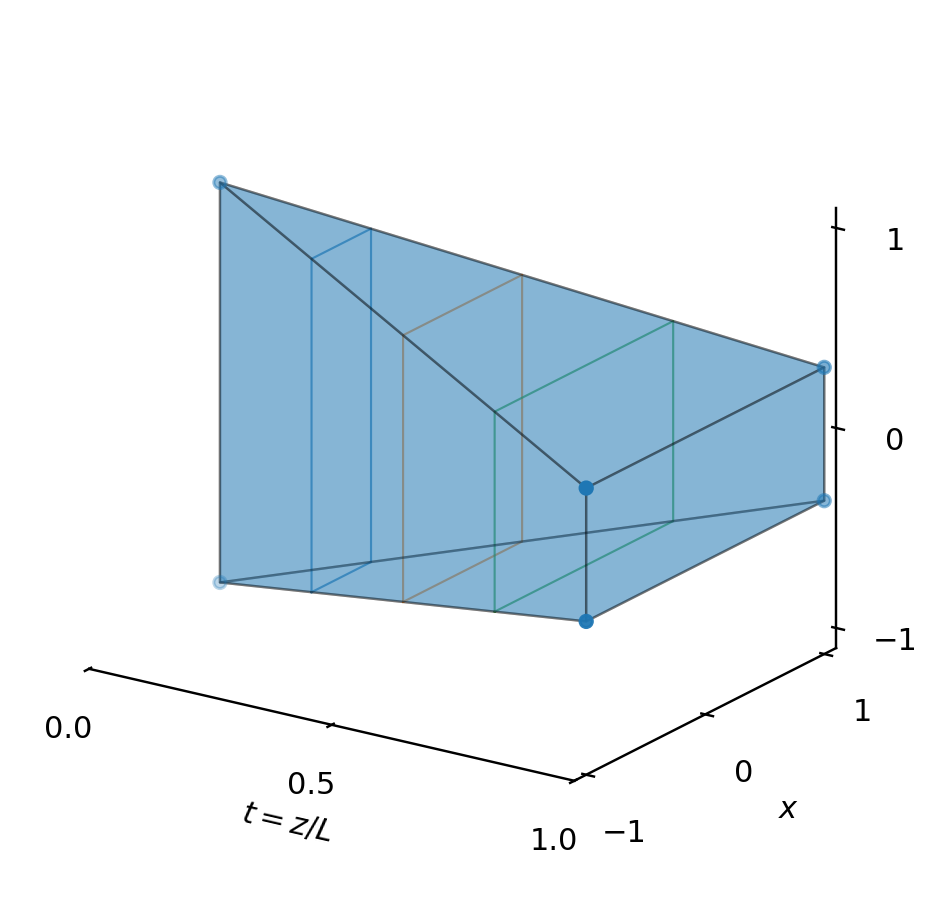}
\caption{The polytope $K_{L,\varepsilon}$ in the rescaled coordinates $(t,x,y/\varepsilon)$. One transverse width grows with $t$, while the other decreases.}
\label{fig:body}
\end{figure}

\begin{thm}\label{thm:explicit}
For $L=1000$ and $\varepsilon=1/100$, one has
$$
\Delta(K_{L,\varepsilon})>\theta(K_{L,\varepsilon}).
$$
\end{thm}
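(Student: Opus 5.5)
The plan is to reduce the three-dimensional comparison to a comparison of one-dimensional Gini mean differences of the axial coordinate $t$, using Lemma~\ref{lem:axis}. Every point of $K_{L,\varepsilon}$, and every point of its boundary, has the form $(Lt,x,y)$ with $(x,y)\in[-1,1]\times[-\varepsilon,\varepsilon]$. So Lemma~\ref{lem:axis} applies with $D=2\sqrt{1+\varepsilon^2}$, for both the volume and the surface measure. With $T_v$ and $T_s$ denoting the $t$-coordinate of a uniform point of $K$ and of $\partial K$ respectively, it gives
$$
\Delta(K_{L,\varepsilon})\ge L\,\Gini(T_v),\qquad \theta(K_{L,\varepsilon})\le L\,\Gini(T_s)+D .
$$
It therefore suffices to show $\Gini(T_v)-\Gini(T_s)>D/L\approx 0.002$.

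First, the volume profile. The section at height $t$ is a rectangle of sides $2t$ and $2\varepsilon(1-2t/3)$, so $T_v$ has density proportional to $t-\tfrac23t^2$; note this is independent of $\varepsilon$. Its normalizing constant is $5/18$ and its CDF is $F(t)=(9t^2-4t^3)/5$. I will use $\Gini=2\int_0^1F(1-F)$. This gives $\int F=2/5$ and $\int F^2=227/875$, hence
$$
\Gini(T_v)=\frac{246}{875}\approx 0.2811 .
$$

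Next, the surface profile. The two faces $x=\pm t$ contribute $t$-density $2\cdot 2\varepsilon(1-2t/3)\sqrt{L^2+1}$. The two faces $y=\pm\varepsilon(1-2t/3)$ contribute $2\cdot 2t\sqrt{L^2+4\varepsilon^2/9}$. The end face $t=1$ is an atom at $t=1$ of mass $4\varepsilon/3$, and the apex edge at $t=0$ has zero area. So $T_s$ is an explicit mixture. Its dominant part, of weight $1-O(\varepsilon)$, is the density $2t$, whose Gini mean difference is $4/15\approx0.2667$. The remaining parts are a density proportional to $1-2t/3$ of weight about $\tfrac{3}{2}\varepsilon\cdot\ldots=O(\varepsilon)$ and an atom of weight $O(\varepsilon/L)$.

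The main step is to bound $\Gini(T_s)$ rigorously from above with explicit constants. I would write $T_s$ as a mixture $\sum w_i\mu_i$ and use the mixture identity
$$
\Gini\Big(\sum_i w_i\mu_i\Big)=\sum_{i,j}w_iw_j\,\E|X_i-X_j'|,
$$
with $X_i\sim\mu_i$ and $X_j'\sim\mu_j$ independent. In this identity all cross terms are at most $1$, since everything lives in $[0,1]$, and the weights are computed exactly for $L=1000$ and $\varepsilon=1/100$. The weight of the non-dominant components is at most roughly $2\varepsilon\cdot\frac{2/3}{1/2}\approx 0.027$. This yields $\Gini(T_s)\le 4/15+c\varepsilon$ with $c$ a modest explicit constant, so roughly $\Gini(T_s)\le 0.2667+0.01$. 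Alternatively, I would compute $\Gini(T_s)$ exactly as a rational-type expression through $2\int F_s(1-F_s)$; this is only a polynomial integral plus the atom.

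Finally, I will check that $0.2811-\Gini(T_s)$ exceeds $2\sqrt{1+10^{-4}}/1000$. I expect the gap to be about $0.0144$ minus the $O(\varepsilon)$ correction, which is still comfortably above $0.002$. The only delicate point is keeping the $\varepsilon$-correction in the surface Gini explicit and small enough. If the crude mixture bound is too lossy, I would fall back on the exact CDF integral for $T_s$.
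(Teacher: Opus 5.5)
Your reduction is the paper's: Lemma~\ref{lem:axis} on both sides with $D=2\sqrt{1+\varepsilon^2}$, the exact value $\Gini(T_v)=246/875$, and the same face densities for the surface law, namely $4\varepsilon(1-\tfrac{2t}{3})\sqrt{L^2+1}$ and $4t\sqrt{L^2+4\varepsilon^2/9}$, plus an atom of mass $4\varepsilon/3$ at $t=1$. The gap is in the step you call the main one: bounding every cross term of the mixture identity by $1$ does not close the argument. Let $w_1$ be the weight of the density $2t$. That estimate reads
$$\Gini(T_s)\le w_1^2\cdot\tfrac{4}{15}+(1-w_1^2)=\tfrac{4}{15}+\tfrac{11}{15}(1-w_1^2).$$
The true non-dominant weight is $1-w_1\approx \tfrac43\varepsilon/(1+\tfrac43\varepsilon)\approx 0.0132$. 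The ratio of the two lateral areas is about $\varepsilon\cdot\frac{2/3}{1/2}$, so your $0.027$ is off by a factor $2$. Hence $1-w_1^2\approx0.0262$ and the bound is about $0.2858$. You need $\Gini(T_s)<246/875-D/L\approx0.2791$. The available margin over $4/15$ is therefore only about $0.0125$, while the crude estimate costs about $0.019$ (about $0.039$ with your weight $0.027$). So the claimed outcome ``roughly $0.2667+0.01$'' is not what this bound gives, and the plan as primarily stated fails.

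What does close the argument is your fallback, which is exactly the paper's proof. On $[0,1)$ the surface axial CDF is $F_\partial(t)=\alpha t+\beta t^2$, and the remaining mass is an atom at $1$. Then $2\int_0^1F_\partial(1-F_\partial)$ gives $g_{1000,1/100}\approx0.26927$, so $g+D/L\approx0.2713<0.2811$.

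If you prefer to stay within the mixture identity, you only need a non-trivial bound on the one cross term that matters. Take $X_1$ with density $2t$ and $X_2$ with density $\tfrac32-t$. Then $\E|X_1-X_2|=\tfrac{11}{30}$ and $\Gini(X_2)=\tfrac{19}{60}$, which reproduces $0.26927$. Even Cauchy--Schwarz, $\E|X_1-X_2|\le\sqrt{\E(X_1-X_2)^2}=\sqrt7/6\approx0.441$, already suffices. Either way, the final numerical comparison has to be carried out rather than anticipated, since the crude version of it goes the wrong way.
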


\begin{proof}

The area of the section at level $t$ equals
$
4\varepsilon t\left(1-\frac{2t}{3}\right).
$
Hence the axial coordinate of a uniform interior point has density
$$
p_I(t)=\frac{18}{5}t\left(1-\frac{2t}{3}\right)
$$
and distribution function
$$
F_I(t)=\frac{9t^2-4t^3}{5}.
$$
Using $\Gini(p)=2\int_0^1F(t)(1-F(t))\,dt$, we obtain
$$
\Gini(p_I)=\frac{246}{875}=0.281142857\ldots.
$$
Lemma~\ref{lem:axis} therefore gives
$$
\frac{\Delta(K_{L,\varepsilon})}{L}\geq\frac{246}{875}.
$$

We next compute the axial distribution of normalized surface measure. The two faces $x=\pm t$ have combined area density
$$
4\varepsilon\left(1-\frac{2t}{3}\right)\sqrt{L^2+1},
$$
while the two faces $y=\pm\varepsilon(1-2t/3)$ have combined area density
$$
4t\sqrt{L^2+\frac{4\varepsilon^2}{9}}.
$$
The end face $t=1$ has area $4\varepsilon/3$. Put
$$
a=\varepsilon\sqrt{L^2+1},\qquad
b=\sqrt{L^2+\frac{4\varepsilon^2}{9}}-\frac{2\varepsilon}{3}\sqrt{L^2+1},
$$
$$
D=a+\frac b2+\frac{\varepsilon}{3},\qquad
\alpha=\frac aD,\qquad \beta=\frac{b}{2D}.
$$
For $0\leq t<1$, the boundary axial distribution function is
$$
F_{\partial}(t)=\alpha t+\beta t^2;
$$
the remaining mass is an atom at $t=1$. Consequently its Gini mean difference is exactly
$$
g_{L,\varepsilon}
=\alpha+\frac{2\beta}{3}-\frac{2\alpha^2}{3}-\alpha\beta-\frac{2\beta^2}{5}.
$$
The transverse diameter of $K_{L,\varepsilon}$ is at most $2\sqrt{1+\varepsilon^2}$, so Lemma~\ref{lem:axis} gives
$$
\frac{\theta(K_{L,\varepsilon})}{L}
\leq g_{L,\varepsilon}+\frac{2\sqrt{1+\varepsilon^2}}{L}.
$$
For the values suggested in the statement,
$
g_{1000,1/100}=0.2692731635\ldots
$
and hence
$$
\frac{\theta(K_{1000,1/100})}{1000}
\leq0.2712732636\ldots
<0.2811428571\ldots
\leq\frac{\Delta(K_{1000,1/100})}{1000}.
$$
\end{proof}

The family also has a clean asymptotic criterion. As $L\to\infty$, the boundary axial density tends to the normalization of
$$
t+\varepsilon\left(1-\frac{2t}{3}\right),
$$
and its Gini mean difference is
$$
g_{\partial}(\varepsilon)
=\frac{4(19\varepsilon^2+33\varepsilon+9)}{15(4\varepsilon+3)^2}.
$$
Moreover,
$$
g_{\partial}(\varepsilon)-\frac{246}{875}
=\frac{2(746\varepsilon^2+2694\varepsilon-171)}{2625(4\varepsilon+3)^2}.
$$
Thus every
$$
0<\varepsilon<\varepsilon_*:=\frac{-1347+45\sqrt{959}}{746}=0.062396288\ldots
$$
produces counterexamples for all sufficiently large $L$.

\begin{rem}\label{rmk:why}
    Consider a long body with rectangular transverse sections
$$
Q_t=\prod_{i=1}^{d-1}[-a_i(t),a_i(t)].
$$
Up to constant factors, the axial density of volume measure is
$$
\prod_{i=1}^{d-1}a_i(t),
$$
whereas the lateral surface density is
$$
\sum_{i=1}^{d-1}\prod_{j\neq i}a_j(t).
$$
In dimension three this is the difference between a product $a_1a_2$ and a sum $a_1+a_2$. Choosing one width increasing and one decreasing can make the normalized surface density more concentrated along the long axis than the normalized volume density.  In dimension two there is only one transverse width: the second expression is the empty product. 
\end{rem}

\begin{prop}\label{prop:counter-ex}
There are counterexamples in every dimension $d\geq3$.
\end{prop}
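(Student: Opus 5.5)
Fix $d\ge3$ and let $K=K_{L,\varepsilon}$ with $L=1000$, $\varepsilon=1/100$ be the three-dimensional body of Theorem~\ref{thm:explicit}. I would take the product $K\times[0,\delta]^{d-3}$ with a small side length $\delta>0$, and let $\delta\to0$. The argument splits into a volume part, which is trivial, and a boundary part, which is a small perturbation argument.

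\textbf{Volume side.} A uniform point of $K\times[0,\delta]^{d-3}$ is a pair $(X,W)$ with $X$ uniform in $K$ and $W$ uniform in the cube, independent of $X$. Therefore
$$
\Delta(K\times[0,\delta]^{d-3})\ge \E|X_1-X_2|=\Delta(K)
$$
for every $\delta$. This follows from the projection estimate of Lemma~\ref{lem:axis}, or simply from the monotonicity of the Euclidean norm under adding coordinates.

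\textbf{Boundary side.} The boundary of the product is the union of two pieces. The first is $\partial K\times[0,\delta]^{d-3}$, with $(d-1)$-measure $|\partial K|\,\delta^{d-3}$. The second is $K\times\partial([0,\delta]^{d-3})$, with measure $|K|\cdot 2(d-3)\delta^{d-4}$. The first piece dominates only when $\delta$ is \emph{large} compared with $|K|/|\partial K|$. Since $\delta\to0$ would make the second piece dominate, I would instead scale the other way. Choose $\delta$ fixed and small relative to the transverse size of $K$, but still large compared with $|K|/|\partial K|$. Then rescale the whole problem, which preserves the ratio $\Delta/\theta$.

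The cleaner way to see this is to take the product $K\times[0,M]^{d-3}$ with $M\to\infty$ and normalize. Equivalently, replace $K$ by $\lambda K$ with $\lambda\to\infty$ and keep the cube $[0,1]^{d-3}$ fixed. In that regime the faces $\lambda K\times\partial[0,1]^{d-3}$ have measure $\lambda^3|K|\cdot 2(d-3)$, while $\partial(\lambda K)\times[0,1]^{d-3}$ has measure $\lambda^2|\partial K|$. This is wrong again: the volume term wins. So the scale must be chosen so that the cube is long, $K\times[0,M]^{d-3}$ with $M$ large. Then the lateral part carries weight $|\partial K|M^{d-3}$ versus $2(d-3)|K|M^{d-4}$, and the fraction of boundary mass on the cube faces tends to $0$ as $M\to\infty$.

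This regime creates its own difficulty. Distances are now dominated by the long cube coordinates, where both the interior and the boundary distributions are essentially uniform on $[0,M]^{d-3}$. The quantities $\Delta$ and $\theta$ then agree to leading order, and the comparison is decided at lower order.

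\textbf{Main obstacle and fallback.} The main obstacle is exactly this: keeping the boundary measure concentrated on $\partial K\times$cube while ensuring the $K$-directions still decide the inequality. I would therefore avoid pure products and instead use the long-body mechanism of Remark~\ref{rmk:why} directly in dimension $d$. Take the long body
$$
\{(Lt,x,y,w):0\le t\le1,\ |x|\le t,\ |y|\le\varepsilon(1-2t/3),\ w\in[0,\eta]^{d-3}\}
$$
with $\eta\ll\varepsilon$ fixed and $L\to\infty$. By Lemma~\ref{lem:axis}, both $\Delta/L$ and $\theta/L$ converge to the Gini mean differences of the axial laws. The axial volume law is unchanged, namely $p_I$. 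The axial surface law is now the normalization of
$$
\eta^{d-3}\bigl(t+\varepsilon(1-2t/3)\bigr)+O(\eta^{d-4})\cdot t\,\varepsilon(1-2t/3).
$$
The second term comes from the faces in the $w$-directions. As $\eta\to0$ this term dominates, and its normalized density equals $p_I$ exactly, giving equality. That is not good enough, so I would instead let $\eta\to\infty$ relative to $\varepsilon$, which is the opposite limit.

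The difficulty resolves once the lateral weights are computed honestly. Up to the constant factor $L$, the lateral density is $\sum_i\prod_{j\ne i}a_j$ with $a_1=t$, $a_2=\varepsilon(1-2t/3)$ and $a_3=\dots=a_{d-1}=\eta$. Factoring out $\eta^{d-4}$ gives
$$
\eta\bigl(t+\varepsilon(1-2t/3)\bigr)+(d-3)\,t\,\varepsilon(1-2t/3).
$$
As $\eta\to\infty$ this tends to the three-dimensional boundary law of Theorem~\ref{thm:explicit}, whose Gini mean difference $g_\partial(\varepsilon)$ is strictly below $246/875$ for $\varepsilon<\varepsilon_*$.

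To finish I would choose the parameters in order. First fix $\varepsilon<\varepsilon_*$. Then choose $\eta$ large enough, by continuity of the Gini mean difference in the density, that the axial boundary Gini is still strictly below $246/875$. Finally choose $L\gg\eta$ so that the additive error $D/L$ from Lemma~\ref{lem:axis} is negligible. Note that the transverse diameter $D$ now grows with $\eta$, which is why $L$ must be chosen last. The same $\eta\to\infty$ limit can be phrased as taking the product with long segments, after which the body is stretched along the $t$-axis.
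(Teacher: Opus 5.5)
Your final argument (the last two paragraphs) is correct. It uses the same construction and the same parameter regime as the paper: $K_{L,\varepsilon}$ times a cube whose side is large but $o(L)$. What differs is how the inequality is verified.

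The paper never recomputes anything in dimension $d$. It gets $\Delta(\widetilde K_{L,M})\ge\Delta(K_L)$ by projection. It then bounds $\theta(\widetilde K_{L,M})\le\theta(K_L)+O(M)+O(L/M+1)$ by treating the cube-face part of the boundary as a perturbation: that part has relative mass $O(1/M)$ and sits in a body of diameter $O(L+M)$. Finally it invokes the linear gap $\Delta(K_L)-\theta(K_L)\ge cL$.

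You instead apply Lemma~\ref{lem:axis} directly in dimension $d$ and compute both axial laws. The volume law is still $p_I$. In the $L\to\infty$ limit, the boundary law is exactly the mixture $(1-\lambda)q_\partial+\lambda p_I$, where $q_\partial$ is the normalization of $t+\varepsilon(1-2t/3)$. The cube faces contribute a density proportional to $p_I$ itself, and $\lambda\to0$ as your cube side $\eta\to\infty$. Since $\Gini$ of the mixture is a quadratic polynomial in $\lambda$ equal to $g_\partial(\varepsilon)<246/875$ at $\lambda=0$, it stays below $246/875$ for large $\eta$.

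What each route buys:
\begin{itemize}
\item Yours identifies the axial law of the cube-face mass exactly rather than bounding it by mass times diameter. It yields an explicit one-dimensional criterion in every dimension, so concrete $\eta,L$ could be certified as in Theorem~\ref{thm:explicit}. It also shows exactly why thin cubes fail: $\lambda\to1$ reproduces $p_I$.
\item The paper's route needs no rectangular-section structure. It works for any three-dimensional family with a gap of order $L$ and with volume and surface area of order $L$.
\end{itemize}

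Two points for the write-up:
\begin{itemize}
\item Discard the abandoned regimes (small $\delta$, and a cube long compared with $L$).
\item At finite $L$ the axial boundary law is not exactly your displayed density. There are factors $\sqrt{L^2+1}/L$ and $\sqrt{L^2+4\varepsilon^2/9}/L$, and the end face gives an atom at $t=1$ of relative mass $O(1/L)$. So besides $D/L\to0$ you need its $\Gini$ to converge as $L\to\infty$ with $\eta$ fixed. This is routine, since all these laws live on $[0,1]$.
\end{itemize}
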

\begin{proof}
The case $d=3$ was established above. Let $d=3+m$, where $m\ge 1$, and put
$$
K_L:=K_{L,1/100},
\qquad
C_M:=[-M,M]^m,
\qquad
\widetilde K_{L,M}:=K_L\times C_M .
$$

Since $1/100<\varepsilon_*$, the preceding asymptotic computation shows that there exists
$c>0$ such that, for all sufficiently large $L$,
$$
\Delta(K_L)-\theta(K_L)\ge cL.
$$

The boundary of $\widetilde K_{L,M}$, up to a set of surface measure zero, is the disjoint union
$$
(\partial K_L\times C_M)\cup(K_L\times\partial C_M).
$$
The first part has surface measure
$$
\mathcal H^2(\partial K_L)(2M)^m,
$$
whereas the second has surface measure
$$
|K_L|\,\mathcal H^{m-1}(\partial C_M)
=
2m\,|K_L|(2M)^{m-1}.
$$
Since both $|K_L|$ and $\mathcal H^2(\partial K_L)$ are of order $L$, the relative surface
mass of $K_L\times\partial C_M$ is
$$
\eta_{L,M}=O(M^{-1}),
$$
uniformly for sufficiently large $L$.

Let $\mu_{L,M}$ be normalized surface measure on
$\partial K_L\times C_M$. It is the product of normalized surface measure on
$\partial K_L$ and normalized volume measure on $C_M$. If $Z_1,Z_2$ are independent with law $\mu_{L,M}$, then
$$
\mathbb E|Z_1-Z_2|
\le \theta(K_L)+2M\sqrt m.
$$

The diameter of $\widetilde K_{L,M}$ is $O(L+M)$. Replacing normalized surface
measure on $\partial\widetilde K_{L,M}$ by $\mu_{L,M}$ changes the corresponding mean
distance by at most
$$
O\!\left(\eta_{L,M}(L+M)\right)
=
O\!\left(\frac{L}{M}+1\right).
$$
Consequently,
$$
\theta(\widetilde K_{L,M})
\le
\theta(K_L)
+
O(M)
+
O\!\left(\frac{L}{M}+1\right).
$$
Choose $M=M(L)$ so that
$$
M\longrightarrow\infty,
\qquad
\frac{M}{L}\longrightarrow0
\qquad (L\to\infty),
$$
then the error terms above are $o(L)$.
On the other hand, projection onto the first three coordinates does not increase distances, so
$$
\Delta(\widetilde K_{L,M})\ge \Delta(K_L).
$$

 Combining all inequalities above, we obtain
$$
\Delta(\widetilde K_{L,M})-\theta(\widetilde K_{L,M})
\ge
cL-o(L)>0
$$
for all sufficiently large $L$. Thus $\widetilde K_{L,M}$ is a counterexample in
dimension $d$.
\end{proof}
\section{The lower bound for boundary mean distance}

We now prove the lower estimate stated in the Introduction.  The argument is again a
comparison of one-dimensional distributions.  Here the relevant laws are obtained by
choosing either one random tangent direction or the median of three random tangent
directions on a moving half-boundary.

\begin{thm}\label{thm:lower-boundary-distance}
For every planar convex body $K$,
$$
    \theta(K)>\frac{\operatorname{per}K}{6}.
$$
The constant $1/6$ is sharp.
\end{thm}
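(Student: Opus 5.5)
The plan is to reduce to a one-dimensional inequality, as in the proof of Corollary~\ref{corol:conjecture_res}. By the Cauchy formula \eqref{eq:cosine},
$$
\theta(K)=\frac\pi2\int_{S^1}\Gini(q_u)\,d\bar\sigma(u),
$$
where $q_u$ is the projected normalized arclength measure in the direction $u$. Cauchy's perimeter formula gives $\operatorname{per}K=\pi\int_{S^1}w(u)\,d\bar\sigma(u)$, where $w(u)$ is the width of $K$ in direction $u$. So it suffices to prove
$$
\int_{S^1}\Gini(q_u)\,d\bar\sigma(u)>\frac13\int_{S^1}w(u)\,d\bar\sigma(u).
$$
The natural first target is the pointwise bound $\Gini(q_u)\ge w(u)/3$. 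After normalizing $w(u)=1$, this reads $\Gini(q)\ge 1/3$ for the law $q=(\varphi+\psi)/2$ built in Section~3 from a nonincreasing density $\varphi$ and a nondecreasing density $\psi$.

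Two sanity checks suggest the constant is right and pointwise equality is nearly attainable. The doubled segment has $q$ uniform on $[0,1]$, so $\Gini(q)=1/3$. A thin isosceles triangle projected orthogonally to its base gives an atom of mass $1/2$ plus uniform mass $1/2$, which also yields $2\int F(1-F)=1/3$. In both cases the degenerate shape saturates the bound, which is consistent with sharpness of the constant.

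For sharpness I would take thin rectangles $[0,1]\times[0,\delta]$. As $\delta\to0$, $\theta\to 1/3$ while $\operatorname{per}\to2$, so $\theta/\operatorname{per}\to 1/6$.

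For the inequality itself I would use the identity \eqref{eq:Dqidentity},
$$
\Gini(q)=\tfrac12\bigl(\Gini(X)+\Gini(Y)\bigr)+\tfrac12\int_0^1H^2,
$$
with $X\sim\varphi$ and $Y\sim\psi$. Then I would try to show that the defect $\Gini(q)-1/3$ is controlled from below.

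Lemma~\ref{lem:low_bound_enpdpoint} alone is not enough here. It only yields $B/3$ with $B\ge 1-A+\tfrac32A^2\ge 5/6$, so the variance terms $\Var T,\Var S$ and the slack in $\int H^2\ge A^2$ must be exploited more carefully. The paper's hint about ``one random tangent direction or the median of three random tangent directions on a moving half-boundary'' suggests the right route is geometric rather than per-direction. Parametrize $\partial K$ by the tangent angle. For a random boundary point $B$ with tangent angle $\alpha$, the projection $\langle B,u\rangle$ is monotone in the angle along each half-boundary between the support points in direction $\pm u$. Integrating over $u$ then turns $\int\Gini(q_u)$ into an expectation over pairs of boundary points in which, for each pair, the half-boundary between them is the one that matters. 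I would compare this expectation with the corresponding quantity for the ``median of three tangent directions'' law, whose Gini mean difference is exactly $1/3$ of the range. The expected identity is that a uniform-in-angle order statistic produces the factor $1/3$.

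The main obstacle is this comparison step. One must show that the arclength law on a monotone arc dominates, in the Gini sense, the median-of-three law. Equivalently, one needs a pointwise inequality $F(1-F)\ge G(1-G)$ between distribution functions along a moving half-boundary, using only the convexity structure ($\varphi$ decreasing, $\psi$ increasing). I would first try a direct argument via Proposition~\ref{prop:monotone_factoriz}, reducing to the extremal mixtures $T,S\in\{0,1\}$ and checking $\Gini(q)\ge 1/3$ there.

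Strictness would follow because equality in every direction forces $K$ to degenerate to a segment, which is not a convex body. Hence for a genuine body strict inequality holds on a set of directions of positive measure.
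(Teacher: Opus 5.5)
\textbf{The per-direction route fails.} Your main line rests on the pointwise bound $\Gini(q_u)\ge w(u)/3$, and that bound is false. So nothing built on it can work: not the identity \eqref{eq:Dqidentity} with a sharpened Lemma~\ref{lem:low_bound_enpdpoint}, not the reduction through Proposition~\ref{prop:monotone_factoriz}, and not strictness via ``equality in every direction''.

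Take the triangle with vertices $(0,0)$, $(\tfrac12,0)$, $(1,-1)$ and $u=(1,0)$, so $w(u)=1$. Each edge projects to the uniform law on its shadow. Hence $q_u=m\,\Unif[0,\tfrac12]+(1-m)\,\Unif[\tfrac12,1]$ with $m=(1+\sqrt2)/(1+\sqrt5+2\sqrt2)\approx0.398$, and
\[
\Gini(q_u)=\frac{m^2+(1-m)^2}{6}+m(1-m)=\frac13-\frac{(1-2m)^2}{6}\approx0.3264<\frac13 .
\]
More generally, suppose the middle vertex of a triangle projects to the midpoint of its shadow. Then $\Gini(q_u)-w(u)/3=-w(u)(L_2-L_3)^2/(6P^2)$, where $L_2,L_3$ are the sides meeting at that vertex and $P=\operatorname{per}K$.

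In the abstract setting of Section~2 the failure is even cruder. $\Gini$ is concave and $q$ is linear in the laws of $T,S$, so the extreme cases of your reduction are deterministic $(T,S)=(t,s)\in[0,1]^2$, not only $\{0,1\}^2$. Already $(t,s)=(1,\tfrac12)$, i.e.\ $\varphi\equiv1$, $\psi=2\mathbf 1_{[1/2,1]}$, gives $\Gini(q)=7/24$. So $\theta(K)\ge\operatorname{per}K/6$ holds only after averaging over directions, and a proof must use that averaging essentially.

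\textbf{What the fallback is missing.} Your sketch names the right objects but omits the step that carries the proof. The paper proceeds as follows.
\begin{enumerate}
\item For each $t$, split $\partial K$ into the half-arcs with tangent angles in $[t,t+\pi]$ and $[t+\pi,t+2\pi]$. Use concavity of $\Gini$ under mixtures; this is an inequality, not an identity over pairs of boundary points.
\item Monotonicity of the projection along a half-arc gives $S(t)^2\Gini(\mu_t)=2\int_0^{S(t)}z(S(t)-z)x_t'(z)\,dz=\tfrac13S(t)^3\E\sin M_t$. The factor $\tfrac13$ comes from the density $6z(S-z)/S^3$ of the median of three arclength-uniform points. $M_t$ is the median of their tangent angles, not an angle-uniform order statistic.
\item Replacing $M_t$ by one sample $I_t$ gives $\int_0^{2\pi}S(t)w(t)\,dt=P^2$ by Cauchy's formula.
\end{enumerate}
Everything therefore hinges on
\[
\int_0^{2\pi}S(t)^2\E\sin M_t\,dt\ge\int_0^{2\pi}S(t)^2\E\sin I_t\,dt .
\]
By the layer-cake formula for $\sin$, a pointwise version would need $G_t(\pi-a)-G_t(a)\ge F_t(\pi-a)-F_t(a)$ with $G_t=3F_t^2-2F_t^3$. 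This holds when $F_t(a)\le\tfrac12\le F_t(\pi-a)$. It fails when a short angular end-piece carries more than half the arclength, e.g.\ a non-square rectangle with $t$ near a long side's direction. The paper proves it only in $t$-integrated form, through Lemma~\ref{lem:cyclic-block}. That lemma pairs the $a$- and $(\pi-a)$-splittings of moving half-arcs by a shift $\delta(d)$ depending on the pair of points.

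Your ``pointwise inequality $F(1-F)\ge G(1-G)$'' does not capture this. With $G=3F^2-2F^3$ it holds automatically, and it only says $M_t$ is less dispersed than $I_t$, not that $\E\sin M_t$ is larger. The structure ``$\varphi$ decreasing, $\psi$ increasing'' belongs to the per-direction picture excluded above.

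Strictness also needs its own argument. In the paper, equality forces no three support points of the tangent-angle measure in an open semicircle. Hence $K$ is a polygon with at most four sides, for which the concavity step is strict. Your sharpness example matches the paper's.
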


Assume first that $\partial K$ is $C^2$, strictly convex, has positive curvature, and oriented counter-clockwise.
Parameterize it by the tangent angle $\alpha\in\mathbb R/2\pi\mathbb Z$:
\begin{equation}\label{eq:tangent-parametrization}
    \gamma'(\alpha)=\tau(\alpha)(\cos\alpha,\sin\alpha),
    \qquad ds=\tau(\alpha)\,d\alpha,
\end{equation}
where $\tau>0$ is the radius of curvature.  Put
$$
    P=\operatorname{per}K=\int_0^{2\pi}\tau(\alpha)\,d\alpha,
    \qquad
    S(t)=\int_t^{t+\pi}\tau(\alpha)\,d\alpha.
$$

For fixed $t$, choose a point uniformly with respect to arclength on the half-boundary whose tangent angles lie in $[t,t+\pi]$. If its tangent angle is $\alpha$, put $I_t = \alpha-t \in[0,\pi]$. Then $I_t$ has density
\begin{equation}\label{eq:one-sample-law}
    f_t(r)=\frac{\tau(t+r)}{S(t)}\,.
\end{equation}
Let $M_t$ be the median of three independent copies of $I_t$.

\begin{prop}\label{prop:random-separator}
With the preceding notation,
\begin{equation}\label{eq:median-reduction}
    \theta(K)
    \ge \frac{1}{6P}\int_0^{2\pi}
       S(t)^2\,\mathbb E\sin M_t\,dt.
\end{equation}
\end{prop}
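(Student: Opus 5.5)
The plan is to apply the Cauchy--Crofton (cosine) formula to the chords $B_1-B_2$, which reduces $\theta(K)$ to an average over directions of one-dimensional Gini mean differences. For each direction we then split the boundary into the two half-boundaries on which the projection is monotone, and use concavity of $u\mapsto u(1-u)$ to discard the cross terms. The median of three arises from a triple integral.

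\textbf{Step 1 (reduction to directions).} For $t\in\R/2\pi\mathbb Z$ let $n_t=(-\sin t,\cos t)$. Since $\int_0^{2\pi}|\cos s|\,ds=4$, we have $|z|=\frac14\int_0^{2\pi}|\langle z,n_t\rangle|\,dt$, which is \eqref{eq:cosine} rewritten. Hence
$$\theta(K)=\frac14\int_0^{2\pi}\Gini\bigl(h_t(B)\bigr)\,dt,\qquad h_t(\alpha)=\langle\gamma(\alpha),n_t\rangle,$$
where $B$ is uniform on $\partial K$ with respect to arclength. By \eqref{eq:tangent-parametrization},
$$\frac{d}{d\alpha}h_t(\alpha)=\tau(\alpha)\sin(\alpha-t).$$
So $h_t$ increases on the half-boundary with tangent angles in $[t,t+\pi]$, which has arclength $S(t)$. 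It decreases on the complementary half-boundary, with angles in $[t+\pi,t+2\pi]$ and arclength $S(t+\pi)=P-S(t)$.

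\textbf{Step 2 (discarding cross terms).} Write $F$ for the distribution function of $h_t(B)$, and $G_1,G_2$ for those of $h_t$ under normalized arclength on the two halves. Then
$$F=\frac{S(t)}{P}\,G_1+\frac{S(t+\pi)}{P}\,G_2.$$
Concavity of $u\mapsto u(1-u)$ together with $\Gini=2\int F(1-F)$ gives
$$\Gini\bigl(h_t(B)\bigr)\ge\frac{S(t)}{P}\,\Gini_1+\frac{S(t+\pi)}{P}\,\Gini_2,$$
where $\Gini_1,\Gini_2$ are the Gini mean differences of $h_t$ on the two halves.

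\textbf{Step 3 (the median of three).} On the first half, $h_t$ is monotone in the angle, so for angles $a<b$ in $[t,t+\pi]$
$$|h_t(b)-h_t(a)|=\int_a^b\tau(r)\sin(r-t)\,dr.$$
With $I,I'$ i.i.d.\ with density $f_t$ from \eqref{eq:one-sample-law}, and using $\tau=S(t)f_t$, we get
$$\Gini_1=2S(t)\iiint_{0<a<r<b<\pi}f_t(a)f_t(r)f_t(b)\sin r\,da\,dr\,db.$$
The ordered triple density is $6f_tf_tf_t$ on $\{a<r<b\}$, so this triple integral equals $\tfrac16\E\sin M_t$. Therefore
$$\Gini_1=\tfrac13\,S(t)\,\E\sin M_t.$$
The second half is the first half for the direction $t+\pi$, since $h_{t+\pi}=-h_t$ and reversing sign does not change a Gini mean difference. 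Hence $\Gini_2=\tfrac13\,S(t+\pi)\,\E\sin M_{t+\pi}$.

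\textbf{Step 4 (assembly).} Combining the three steps,
$$\theta(K)\ge\frac{1}{12P}\int_0^{2\pi}\Bigl(S(t)^2\,\E\sin M_t+S(t+\pi)^2\,\E\sin M_{t+\pi}\Bigr)dt.$$
The shift $t\mapsto t+\pi$ shows that the two terms have equal integrals, which yields exactly \eqref{eq:median-reduction}.

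The main obstacle is the bookkeeping in Step 3. One must get the factor $6$ from ordering three i.i.d.\ samples right, and confirm that the $\sin$ weight lands on the middle variable, which makes it the median. The other delicate point is recognizing that the cross-half contributions can simply be dropped via concavity in Step 2, rather than estimated directly.
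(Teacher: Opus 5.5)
Your proof is correct and follows the paper's argument. The cosine formula reduces $\theta(K)$ to directional Gini mean differences, concavity of $\Gini$ under the mixture of the two monotone half-boundaries discards the cross terms, and the Gini mean difference on a monotone arc equals $\frac{1}{3}S(t)\,\E\sin M_t$ via the median-of-three density. The differences are only cosmetic: you integrate over the full circle from the start rather than over $[0,\pi]$ followed by a reflection, you do the order-statistics computation directly in the tangent-angle variable rather than in arclength (which avoids the paper's step that the median commutes with the monotone change of variables), and you justify concavity via $\Gini=2\int F(1-F)$, which the paper simply asserts.
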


\begin{proof}
Fix $t$ and project the body onto
$$
    u_t=(-\sin t,\cos t).
$$
The boundary $\partial K$ splits into two arcs joining the two support points with tangent vectors orthogonal to $u_t$. Let $\Gamma_t$ and $\Gamma_{t+\pi}$ be the two boundary arcs whose tangent angles range over $[t, t+\pi]$ and $[t+\pi, t+2\pi]$. Their arclengths are $S(t)$ and $S(t+\pi)$ = $P-S(t)$. Let $\mu_t, \mu_{t+\pi}$ be the pushforwards of normalized arclength measures on these arcs under projection onto $u_t$. Finally, let \(\nu_t\) be the pushforward of normalized arclength measure on the whole boundary \(\partial K\) under projection onto \(u_t\). Since
$$
\partial K=\Gamma_t\cup\Gamma_{t+\pi},
\qquad
S(t)+S(t+\pi)=P,
$$
we have
$$
\nu_t
=
\frac{S(t)}{P}\mu_t
+
\frac{S(t+\pi)}{P}\mu_{t+\pi}.
$$
As the Gini functional is concave under mixtures, 
\begin{equation}\label{concave}
\Gini(\nu_t)\ge\frac{S(t)}{P}\Gini(\mu_t)+\frac{S(t+\pi)}{P}\Gini(\mu_{t+\pi}).\end{equation}
Now
$$
|z|
=
\frac12\int_0^\pi
|\langle z,u_t\rangle|\,dt,
$$
and hence
$$
\theta(K)
=
\frac12\int_0^\pi \Gini(\nu_t)\,dt.
$$
Using \ref{concave},

$$\theta(K) \ge\frac1{2P}\int_0^\pi\left[S(t)\Gini(\mu_t)+S(t+\pi)\Gini(\mu_{t+\pi})\right]dt.$$
Projection of $\Gamma_{t+\pi}$ onto $u_{t+\pi}=-u_t$ reflects the
law obtained by projection onto $u_t$. Since the Gini mean difference
is invariant under reflection, the change of variable $r=t+\pi$ gives
\begin{equation}\label{theta_lower_bound}\theta(K) \geq \frac1{2P}\left[\int_0^\pi S(t)\Gini(\mu_t)\,dt+\int_\pi^{2\pi}S(t)\Gini(\mu_t)\,dt\right]=\frac1{2P}\int_0^{2\pi}S(t)\Gini(\mu_t)\,dt.
\end{equation}

Parametrize $\Gamma_t$ by arclength as
$$
r_t:[0,S(t)]\longrightarrow \Gamma_t,
$$
starting at the point with tangent angle $t$, and put
$$
x_t(s)=\langle r_t(s),u_t\rangle.
$$
If $\alpha_t(s)$ denotes the tangent angle at $r_t(s)$, then
$$
r_t'(s)=(\cos\alpha_t(s),\sin\alpha_t(s))
$$
and therefore
$$
x_t'(s)
=
\langle r_t'(s),u_t\rangle
=
\sin(\alpha_t(s)-t)\ge0.
$$
Thus $x_t$ is nondecreasing on $[0,S(t)]$.  Using that $x_t$ is
nondecreasing and applying Fubini's theorem we get
\begin{equation}\label{eq:chain-energy}
\begin{aligned}
    S(t)^2\Gini(\mu_t)
    &=\int_0^{S(t)}\int_0^{S(t)}
       |x_t(s)-x_t(r)|\,dr\,ds = 2\iint_{0\leq r\leq s \leq S(t)} x_t(s)-x_t(r)\,dr\,ds \\
    &=2\iint_{0\leq r\leq s \leq S(t)} \int_r^s x_t'(z)\,dz\,dr\,ds=2\int_0^{S(t)}z\bigl(S(t)-z\bigr)x_t'(z)\,dz.
\end{aligned}
\end{equation}
Observe that the median $Z_{(2)}$ of three independent uniform points $Z_1, Z_2,Z_3$ on $[0,S(t)]$ has density
$$
    \frac{6z(S(t)-z)}{S(t)^3}\,dz,
$$
so
$$S(t)^2\Gini(\mu_t) = \frac{S(t)^3}{3} \E x_t'(Z_{(2)}).$$ Since $s \mapsto \alpha_t(s)$ is increasing on the half-boundary, taking the median commutes with change of variable, i.e.
$$\alpha_t(Z_{(2)}) - t  = M_t$$

and, by \eqref{eq:tangent-parametrization}, $x_t'(s) = \sin(\alpha_t(s) - t)$.  Hence
$$    S(t)^2\Gini(\mu_t)=\frac{S(t)^3}{3}\,\mathbb E\sin M_t.$$

Finally, plugging this into \ref{theta_lower_bound} gives
\begin{equation}\label{tehta_lower_bound_fin}
\theta(K)
\ge
\frac1{6P}
\int_0^{2\pi}
S(t)^2\,\E\sin M_t\,dt.
\end{equation}
\end{proof}
Let us replace $M_t$ in the integral above with $I_t$. Then
$$
\begin{aligned}
\int_0^{2\pi}S(t)^2\,\E\sin I_t\,dt
&=
\int_0^{2\pi}
S(t)
\int_0^\pi
\tau(t+\omega)\sin \omega\,d\omega\,dt.
\end{aligned}
$$
The inner integral is the width $w(t)$ of $K$ in the direction $u_t$. Hence
\begin{equation}\label{second_brick}
\int_0^{2\pi}S(t)^2\E\sin I_tdt=\int_0^{2\pi}S(t)w(t)dt=\int_0^\pi\bigl(S(t)+S(t+\pi)\bigr)w(t)dt=P\int_0^\pi w(t)dt=P^2,
\end{equation}
where in the last step we used Cauchy's perimeter formula.
Therefore, the following Proposition together with Proposition~\ref{prop:random-separator} would conclude the proof of Theorem~\ref{thm:lower-boundary-distance} with non-strict inequality.
\begin{prop}\label{prop:median-concentration}
For every $0\le a\le\pi/2$,
\begin{equation}\label{eq:central-concentration}
\begin{aligned}
    \int_0^{2\pi}S(t)^2
       \mathbb P\{M_t\in[a,\pi-a]\}\,dt
    \ge
    \int_0^{2\pi}S(t)^2
       \mathbb P\{I_t\in[a,\pi-a]\}\,dt.
\end{aligned}
\end{equation}
Consequently,
\begin{equation}\label{eq:median-improves-sine}
    \int_0^{2\pi}S(t)^2\mathbb E\sin M_t\,dt
    \ge
    \int_0^{2\pi}S(t)^2\mathbb E\sin I_t\,dt.
\end{equation}
\end{prop}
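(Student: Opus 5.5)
The consequence \eqref{eq:median-improves-sine} follows from \eqref{eq:central-concentration} by a layer-cake argument. For $\omega\in[0,\pi]$ we have $\sin\omega=\int_0^{\min(\omega,\pi-\omega)}\cos a\,da$. Hence, for any random variable $J\in[0,\pi]$,
$$
\E\sin J=\int_0^{\pi/2}\cos a\;\mathbb P\{J\in[a,\pi-a]\}\,da .
$$
Applying this with $J=M_t$ and $J=I_t$, multiplying by $S(t)^2$, integrating in $t$ (Fubini) and using $\cos a\ge0$ on $[0,\pi/2]$ gives \eqref{eq:median-improves-sine}. So the substance is \eqref{eq:central-concentration}.

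To prove \eqref{eq:central-concentration}, let $F_t$ be the distribution function of $I_t$ and put $h(u)=3u^2-2u^3$, so that $\mathbb P\{M_t\le x\}=h(F_t(x))$. Since $h(u)+h(1-u)=1$ and $\tau$ has no atoms, the two tail events give
$$
\mathbb P\{M_t\notin[a,\pi-a]\}=h(u_t)+h(v_t),
\qquad
\mathbb P\{I_t\notin[a,\pi-a]\}=u_t+v_t .
$$
Here $u_t=F_t(a)$ and $v_t=1-F_t(\pi-a)$. Set $m(t)=\int_t^{t+a}\tau$. Then $S(t)u_t=m(t)$ and $S(t)v_t=m(t+\pi-a)$. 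We also have $h(u)-u=u(1-u)(2u-1)$, so
$$
S^2\bigl(h(m/S)-m/S\bigr)=G(m,S):=\frac{m(S-m)(2m-S)}{S}.
$$
Thus \eqref{eq:central-concentration} is equivalent to
$$
\int_0^{2\pi}\bigl[G(m(t),S(t))+G(m(t+\pi-a),S(t))\bigr]\,dt\le0 .
$$
The substitution $s=t+\pi-a$ in the second term, together with $S(s-\pi+a)=P-S(s+a)$, rewrites the integrand as $G(m(t),S(t))+G(m(t),P-S(t+a))$.

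Now cut the circle of tangent angles at $t,\,t+a,\,t+\pi,\,t+\pi+a$ into four arcs. Their $\tau$-masses are
$$
m=m(t),\quad A=\int_{t+a}^{t+\pi}\tau,\quad m'=m(t+\pi),\quad B=\int_{t+\pi+a}^{t+2\pi}\tau .
$$
Then $S(t)=m+A$ and $P-S(t+a)=m+B$. Moreover $G(m,m+x)=\phi(m,x)$, where $\phi(m,x):=\dfrac{mx(m-x)}{m+x}$ is antisymmetric: $\phi(m,x)=-\phi(x,m)$. Pairing $t$ with $t+\pi$, the integral becomes
$$
\int_0^{\pi}\bigl[\phi(m,A)+\phi(m,B)+\phi(m',A)+\phi(m',B)\bigr]\,dt .
$$
We must show this is nonpositive. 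This is natural: $m,m'$ are masses of arcs of angular length $a$, while $A,B$ are masses of arcs of length $\pi-a\ge a$, and $\phi(m,x)>0$ only when the short arc outweighs the long one.

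The main obstacle is this last integral inequality, which is not pointwise in $t$. My first attempt will exploit that the long arc $[t+a,t+\pi]$ contains the short arc $[t+\pi-a,t+\pi]$, of mass $m(t+\pi-a)$, plus a remainder. I will try to show that the positive contributions $\phi(m(t),x)$, coming from a heavy short arc, are dominated after a shift $t\mapsto t+\pi-a$ by negative contributions $\phi(x',m(\cdot))$ in which the same heavy short arc plays the role of the long arc's content. The antisymmetry of $\phi$ makes these cancel exactly when $A$ equals the mass of the contained short arc. The hope is that the remainder only helps, which needs a monotonicity of $\phi(m,x)$ in $x$ along the relevant range, or else a convexity or rearrangement argument in the variables $(m,x)$.

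If this pairing fails, the fallback is a discretization. Approximate $\tau$ by step functions, reduce to a finite quadratic-type inequality in the masses of equal small arcs, and verify it by a symmetrization (averaging over rotations) argument.
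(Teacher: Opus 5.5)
There is a genuine gap at the key step. Your layer-cake step and your reduction are correct. The inequality you reach, $\int_0^{2\pi}[\phi(m(t),A(t))+\phi(m(t),B(t))]\,dt\le0$, is exactly $\mathcal F_{a,\pi-a}(\mu)\le\mathcal F_{\pi-a,a}(\mu)$, i.e.\ Lemma~\ref{lem:cyclic-block} with $h=\pi$. Indeed $\int_0^{2\pi}\phi(m(t),A(t))\,dt=\mathcal F_{a,\pi-a}(\mu)$ directly, and antisymmetry plus the shift $t\mapsto t+\pi-a$ give $\int_0^{2\pi}\phi(m(t),B(t))\,dt=-\mathcal F_{\pi-a,a}(\mu)$.

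But this inequality is the entire content of the proposition, and you do not prove it. Moreover, the route you sketch fails. Pair the $A$-term at $t$ with the $B$-term at $t+\pi-a$, and set $m_1=\mu[t,t+a)$, $m_2=\mu[t+\pi-a,t+\pi)$, $r=\mu[t+a,t+\pi-a)$. The combined integrand is then $\phi(m_1,m_2+r)+\phi(m_2,m_1+r)$. It vanishes at $r=0$, which is your exact cancellation. The remainder does not only help, however: for $m_2=0$ and $0<r<m_1$ the integrand equals $m_1r(m_1-r)/(m_1+r)>0$. Consistently with this, $\phi(m,x)$ is increasing in $x$ for $x<(\sqrt2-1)m$, so the monotonicity you hope for is false. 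No single global shift in $t$ yields a pointwise inequality, because a short angular end-piece can carry most of the arclength. Your discretization/symmetrization fallback is not specified enough to count as an argument.

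The missing idea is to pass from arcs to pairs of points.
\begin{enumerate}
\item Write $\phi(L,R)=LR\,(1-2R/H)$ with $H=L+R$. Expand $LR=\iint\mathbf 1\{x\in[t,t+r)\}\,\mathbf 1\{y\in[t+r,t+\pi)\}\,d\mu(x)\,d\mu(y)$.
\item By Fubini, $\mathcal F_{r,\pi-r}(\mu)$ becomes an integral over ordered pairs $(x,y)$. The inner quantity is $\int(1-2R_r(t)/H(t))\,dt$ over those $t$ whose $r$-splitting puts $x$ in the first block and $y$ in the second.
\item Fix a pair at oriented distance $d$, and let $b=\pi-a$. The admissible $t$-sets for $r=a$ and $r=b$ are translates of each other by a pair-dependent amount $\delta(d)\in[0,b-a]$.
\item Under this translation the cut point moves right while the window moves left. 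An elementary cross-multiplication then gives $R_a(t)/H(t)\ge R_b(t-\delta)/H(t-\delta)$.
\item Integrating over pairs gives the lemma.
\end{enumerate}
It is this pair-dependent shift, rather than a global one, that makes the comparison pointwise.
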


For a randomly chosen half-boundary, the median of three random points tends to move away from its ends. If both angular end-pieces \([t,t+a]\) and \([t+\pi-a,t+\pi]\) contain at most half of the arclength, this already gives the statement by pointwise comparison in \(t\). The difficulty is that tangent angle and arclength need not be distributed uniformly: a short angular end-piece may carry more than half of the arclength. Proof of this Proposition relies on the following Lemma.
\begin{lem}\label{lem:cyclic-block}
Let $\mu$ be a finite nonnegative Borel measure on the circle of circumference $2h$.  Suppose
$0<a\le b$ and $a+b=h$.  For $x<y\le x+2h$, write $\mu[x,y)$ for the $\mu$-mass of the positively oriented half-open arc from $x$ to $y$. Put
\[
    \Phi(x,y)=\frac{xy(x-y)}{x+y}, \,\,\text{with} \,\,\Phi(0,0)=0,
\]
and define
\begin{equation}\label{eq:block-functional}
    \mathcal F_{a,b}(\mu)
    =\int_0^{2h}
      \Phi\bigl(\mu[t,t+a),\mu[t+a,t+a+b)\bigr)\,dt.
\end{equation}
Then
\begin{equation}\label{eq:block-inequality}
    \mathcal F_{a,b}(\mu)\le \mathcal F_{b,a}(\mu).
\end{equation}
\end{lem}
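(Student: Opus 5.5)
The plan is to first rewrite both sides as integrals over the same family of arcs, so that \eqref{eq:block-inequality} becomes a single inequality of fixed sign. For $t\in[0,2h)$ put
\[
X_t=\mu[t,t+a),\qquad R_t=\mu[t+a,t+h),\qquad L_t=\mu[t-b,t),\qquad X'_t=\mu[t+h,t+h+a).
\]
Since $a+2b+a=2h$, the four arcs $L_t,X_t,R_t,X'_t$ tile the circle in cyclic order. By definition $\mathcal F_{a,b}(\mu)=\int_0^{2h}\Phi(X_t,R_t)\,dt$. In $\mathcal F_{b,a}$ we substitute $t\mapsto t-b$. This turns the pair of blocks into $\bigl(\mu[t-b,t),\mu[t,t+a)\bigr)=(L_t,X_t)$, and by antisymmetry $\Phi(y,x)=-\Phi(x,y)$ we get
\[
\mathcal F_{b,a}(\mu)-\mathcal F_{a,b}(\mu)=-\int_0^{2h}\bigl[\Phi(X_t,L_t)+\Phi(X_t,R_t)\bigr]\,dt .
\]
So it suffices to show that the right-hand integral is nonpositive. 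The shift $t\mapsto t+h$ swaps $X_t\leftrightarrow X'_t$ and $L_t\leftrightarrow R_t$. I will therefore symmetrize and study
\[
G(t)=\Phi(X,L)+\Phi(X,R)+\Phi(X',L)+\Phi(X',R).
\]
This is the sum of $\Phi$(short arc, long arc) over the four adjacent pairs in the tiling.

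Next I will exploit the structure of $\Phi$. A direct computation gives
\[
\Phi(x,y)=2x^2-xy-\frac{2x^3}{x+y}.
\]
In particular $\Phi(x,\cdot)$ is concave, and it is nonpositive when $x\le y$. The polynomial part integrates explicitly. The quantities $\int X_t^2\,dt$, $\int X_tL_t\,dt$, etc.\ are all of the form $\iint k(\theta-\phi)\,d\mu(\theta)\,d\mu(\phi)$, where $k$ is the overlap-length kernel of the relevant arcs. Hence the polynomial contribution to $\int G$ is a quadratic form $\iint K(\theta-\phi)\,d\mu\,d\mu$ with an explicit piecewise-linear kernel $K$ built from the lengths $a\le b$.

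The rational part $-\,2x^3/(x+y)$ is jointly convex in $(x,y)$ on the positive quadrant, since it is the perspective of a convex function. I would bound its integral from below by a tangent-plane (Jensen-type) estimate. That produces a further quadratic lower bound, which is then compared with the polynomial part.

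The main obstacle is exactly this last comparison. The tangent-plane bound must be chosen so that the combined kernel $K^\ast$ is a negative-definite-type (or pointwise $\le 0$ after symmetrization) kernel on the circle for every $a\le b$. There is no pointwise inequality $G(t)\le0$, since a short arc can carry most of the mass at a given $t$. So the averaging over $t$ is essential, and it is only visible at the level of the kernel.

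My first attempt is as follows. I would decompose each long arc of length $b$ as the short arc of length $a$ plus an arc of length $b-a$, and use concavity of $\Phi(x,\cdot)$ together with translation invariance of $dt$. The aim is to reduce to the balanced case $a=b$, where $G$ vanishes identically by antisymmetry: there $\Phi(X,L)+\Phi(X',R)$ and $\Phi(X,R)+\Phi(X',L)$ cancel after the shift. The error terms would then be controlled by monotonicity in the parameter $b-a$.

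If that fails, I will first prove the lemma for a finite sum of atoms, where $\int G$ becomes a finite sum over configurations and can be checked by pairing atoms. The general case then follows by weak approximation: $\Phi$ is continuous with $\Phi(0,0)=0$ and $|\Phi(x,y)|\le \max(x,y)^2$, so dominated convergence applies.
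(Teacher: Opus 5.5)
Your first step is correct. Substituting $t\mapsto t-b$ in $\mathcal F_{b,a}$ and using $\Phi(y,x)=-\Phi(x,y)$ reduces the lemma to $\int_0^{2h}[\Phi(X_t,L_t)+\Phi(X_t,R_t)]\,dt\le 0$. Everything after that, however, is a plan rather than a proof, and the routes you sketch cannot close.

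The obstruction is that the lemma is an \emph{equality} for every measure with at most two atoms. For $\mu=m_1\delta_x+m_2\delta_y$, only configurations with one atom in each block contribute. For such a configuration, the set of $t$ realizing it has the same length, $\min(d,a,h-d)$ with $d\le h$ the oriented distance, for the $a$- and the $b$-splitting. Hence $\mathcal F_{a,b}(\mu)=\mathcal F_{b,a}(\mu)$. Consequently, any quadratic surrogate $Q(\mu)=\iint K\,d\mu\,d\mu$ with $\int G\le Q\le 0$ for all $\mu$ must vanish on all one- and two-atom measures. That forces the symmetric part of $K$ to vanish, i.e.\ $Q\equiv 0$. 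Your relaxation step would then itself be equivalent to the lemma, and it would have to be lossless on all one- and two-atom measures.

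\begin{itemize}
\item A Jensen bound in $t$ is already strict for a single atom.
\item A tangent-plane minorant of $x^3/(x+y)$ would have to equal it at every $(X_t,L_t)=(m_2,m_1)$, $m_1,m_2>0$, since these occur on $t$-sets of positive measure. No quadratic expression in the masses equals $m_2^3/(m_1+m_2)$. (Also, $x^3/(x+y)$ is convex, so $-2x^3/(x+y)$ is concave, not convex.)
\item The reduction to $a=b$ also fails. Split $L_t=\mu[t-a,t)+\mu[t-b,t-a)$ and use subadditivity of the concave $\Phi(x,\cdot)$. For $\mu$ with constant density $\rho$, this leaves the remainder $4h\rho^2\Phi(a,b-a)$, which is positive when $a<b<2a$.
\item ``Pairing atoms'' has no mechanism as stated, because $\Phi(X,R)$ is not a sum over pairs of atoms.
\end{itemize}

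The missing idea is to write $\Phi$ as a pair sum with a bounded weight: $\Phi(L,R)=LR\,\bigl(1-2R/H\bigr)$ with $H=L+R=\mu[t,t+h)$. Then $L_r(t)R_r(t)$ is the $\mu\otimes\mu$-mass of pairs $(x,y)$ with $x\in[t,t+r)$ and $y\in[t+r,t+h)$. Fubini gives
\[
\mathcal F_{r,h-r}(\mu)=\iint\Bigl[\int_{\mathcal T_r(x,y)}\bigl(1-2R_r(t)/H(t)\bigr)\,dt\Bigr]\,d\mu(x)\,d\mu(y).
\]
For a fixed pair at oriented distance $d\le h$, the admissible sets are translates: $\mathcal T_b(x,y)=\mathcal T_a(x,y)-\delta(d)$ with $0\le\delta(d)\le b-a$. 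Under $t\mapsto t-\delta$, the cut point moves right (from $t+a$ to $t+b-\delta$) while the window moves left. A one-line cross-multiplication then gives $R_a(t)/H(t)\ge R_b(t-\delta)/H(t-\delta)$. So the inequality holds pair by pair. This comparison is exact on two-atom measures, where the weights coincide, which is exactly what no quadratic kernel can reproduce.
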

\begin{proof}
For $0<r<h$, put
\[
    L_r(t)=\mu[t,t+r),\qquad
    R_r(t)=\mu[t+r,t+h),\qquad
    H(t)=\mu[t,t+h).
\]
Thus
\[
    H(t)=L_r(t)+R_r(t),
\]
and
\[
    \Phi(L_r(t),R_r(t))
    =
    L_r(t)R_r(t)
    \left(
        1-2\frac{R_r(t)}{H(t)}
    \right).
\]
Whenever $H(t)=0$, we interpret $R_r(t)/H(t)$ as $0$; this is
harmless since then $L_r(t)R_r(t)=0$.

Since
\[
    L_r(t)R_r(t)
    =
    \int_{[t,t+r)}
    \int_{[t+r,t+h)}
    d\mu(y)\,d\mu(x),
\]
Fubini's theorem gives
\begin{equation}
\label{eq:block-pair}
    F_{r,h-r}(\mu)
    =
    \iint
    \left[
        \int_{\mathcal T_r(x,y)}
        \left(
            1-2\frac{R_r(t)}{H(t)}
        \right)dt
    \right]
    d\mu(x)\,d\mu(y),
\end{equation}
where
\[
    \mathcal T_r(x,y)
    =
    \left\{
        t:
        x\in[t,t+r),\
        y\in[t+r,t+h)
    \right\}.
\]

We shall call the decomposition

$$ [t,t+h)=[t,t+r)\cup[t+r,t+h) $$

the \(r\)-splitting of the half-arc. Thus \(F_{a,b}\) corresponds to the \(a\)-splitting and \(F_{b,a}\) to the \(b\)-splitting.
Fix an ordered pair $(x,y)$ and let $d$ be the positively oriented distance from $x$ to $y$. Choose the angular origin at \(x\), and represent \(y\) by the distance \(d\in[0,2h)\) from \(x\). Thus \(x=0\) and \(y=d\) on the periodic lift to \(\mathbb R\).

The admissible values of \(t\), namely those for which \(0\in[t,t+r)\) and \(d\in[t+r,t+h)\), are therefore
\[
    T_r(d)
    =
    (-r,0]\cap(d-h,d-r].
\]
 Since $a+b=h$ and $a\le b$, one has
\[
\begin{aligned}
0\le d\le a:\qquad&
    T_a(d)=(-a,d-a],
    &
    T_b(d)=(-b,d-b],
\\
a\le d\le b:\qquad&
    T_a(d)=(-a,0],
    &
    T_b(d)=(d-h,d-b],
\\
b\le d\le h:\qquad&
    T_a(d)=(d-h,0],
    &
    T_b(d)=(d-h,0].
\end{aligned}
\]
Hence
\begin{equation}
\label{eq:block-translation}
    T_b(d)=T_a(d)-\delta(d),
\end{equation}
where
\[
    \delta(d)=
    \begin{cases}
        b-a, & 0\le d\le a,\\
        b-d, & a\le d\le b,\\
        0,   & b\le d\le h.
    \end{cases}
\]

If $h<d<2h$, then $T_r(d)=\varnothing$ for every $0<r<h$,
so there is nothing to prove. We may therefore assume $0\le d\le h$. Thus the admissible starting positions for the \(a\)- and \(b\)-splittings are paired by
\[
    t\longmapsto t'=t-\delta(d).
\]
In other words, for a fixed $t\in T_a(d)$ the cut point of $a$-splitting is $q=t+a$, whereas for the corresponding $b$-splitting it is $q' = t'+b = t+b-\delta(d)$. As $x$ and $y$ are fixed we write $\delta = \delta(d)$.
Note that $q\le q'$, since $\delta\le b-a$. Set
\[
    \lambda=\mu[t-\delta,t),\qquad
    C=\mu[t,q'),\qquad
    V=\mu[q',t+h-\delta),\qquad
    \eta=\mu[t+h-\delta,t+h).
\]
Then
\[
\begin{aligned}
    \frac{R_a(t)}{H(t)}
    &=
    \frac{\mu[q,t+h)}{\mu[t,t+h)}\ge
    \frac{\mu[q',t+h)}{\mu[t,t+h)}
    =
    \frac{V+\eta}{C+V+\eta}
\\
    &\ge
    \frac{V}{\lambda+C+V}
    =
    \frac{\mu[q',t+h-\delta)}
         {\mu[t-\delta,t+h-\delta)}=\frac{R_b(t')}{H(t')}.
\end{aligned}
\]
The second inequality follows by cross-multiplication:
\[
    (V+\eta)(\lambda+C+V)
    -
    V(C+V+\eta)
    =
    \lambda(V+\eta)+C\eta
    \ge0.
\]
Consequently,
\[
    1-2\frac{R_a(t)}{H(t)}
    \le
    1-2\frac{R_b(t')}{H(t')}.
\]
We defined $T_r$ so that $\mathcal T_r(x,y) = x+T_r(d)$. From \eqref{eq:block-translation} one gets \[
    \mathcal T_b(x,y)=\mathcal T_a(x,y)-\delta(d).
\]
Thus, for the fixed ordered pair $(x,y)$, the substitution $s=t-\delta(d)$ gives
\[
\int_{\mathcal T_b(x,y)}
\left(
    1-2\frac{R_b(s)}{H(s)}
\right)\,ds
=
\int_{\mathcal T_a(x,y)}
\left(
    1-2\frac{R_b(t-\delta(d))}
             {H(t-\delta(d))}
\right)\,dt
\ge
\int_{\mathcal T_a(x,y)}
\left(
    1-2\frac{R_a(t)}{H(t)}
\right)\,dt.
\]
Integrating this inequality over $(x,y)$ in
\eqref{eq:block-pair} gives
\[
    F_{a,b}(\mu)\le F_{b,a}(\mu).
\]
\end{proof}

Now we are ready to prove Proposition~\ref{prop:median-concentration}.
\begin{proof}[Proof of Proposition~\ref{prop:median-concentration}]
Denote tangent-angle measure of the body $K$ by $\mu$. That is, for a Borel set \(A\) of angles,
$\mu(A)=\mathcal H^1\left\{x\in \partial K: \alpha(x) \in A\right\}$, where $\alpha(x)$ is the tangent angle at $x$, as in \ref{eq:tangent-parametrization}, and $\mathcal H^1$ is the arclength measure. For fixed $t$, let
$$
    F_t(w)=\mathbb P(I_t<S w) = \frac{1}{S(t)}\mu[t,t+w),\quad S(t)=\mu[t,t+\pi).
$$
The CDF of $M_t$ is
$$
    G_t(w)=3F_t(w)^2-2F_t(w)^3,
$$
so
\begin{equation}\label{eq:median-cdf-difference}
    G_t(w)-F_t(w)
    =F_t(w)(1-F_t(w))(2F_t(w)-1).
\end{equation}
Put
$$
    A_t(w)=S(t)F_t(w)=\mu[t,t+w),
    \quad
    B_t(w)=S(t+w)F_{t+w}(\pi-w)=\mu[t+w, t+\pi),
$$
and then \eqref{eq:median-cdf-difference} says
\begin{equation}\label{eq:phi-cdf}
    S(t)^2\bigl(G_t(w)-F_t(w)\bigr)
    =\Phi\bigl(A_t(w),B_t(w)\bigr).
\end{equation}
After evaluating \eqref{eq:phi-cdf} at $w=a$ and $w=\pi-a$, the left-hand side of
\eqref{eq:central-concentration} minus the right-hand side equals
$$
    \mathcal F_{\pi-a,a}(\mu)-\mathcal F_{a,\pi-a}(\mu),
$$
which is nonnegative by Lemma~\ref{lem:cyclic-block}.  Finally,
\begin{equation}\label{eq:sine-layer-cake}
    \sin w=\int_0^{\pi/2}
       \mathbf 1_{[a,\pi-a]}(w)\cos a\,da,
    \qquad 0\le w\le\pi.
\end{equation}
Integrating \eqref{eq:central-concentration} against $\cos a\,da$ proves
\eqref{eq:median-improves-sine}.
\end{proof}

\begin{proof}[Proof of Theorem~\ref{thm:lower-boundary-distance}]
As we outlined before Proposition~\ref{prop:median-concentration},
\begin{equation}\label{eq:one-sample-baseline}
\begin{aligned}
    \int_0^{2\pi}S(t)^2\mathbb E\sin I_t\,dt
    &=\int_0^{2\pi}S(t)
      \left(\int_t^{t+\pi}\tau(\alpha)\sin(\alpha-t)\,d\alpha\right)dt\\
    &=\int_0^{2\pi}S(t)w(t)\,dt,
\end{aligned}
\end{equation}
where $w(t)$ is the width of $K$ in direction $u_t$. Indeed, $$w(t) = \langle \gamma(t+\pi) - \gamma(t), u_t \rangle = \int_t^{t+\pi}\langle \gamma'(\alpha), u_t \rangle d\alpha = \int_t^{t+\pi} \tau(\alpha)\sin(\alpha-t) d\alpha.$$ Since
$$
    S(t)+S(t+\pi)=P,
    \qquad
    w(t+\pi)=w(t),
$$
Cauchy's perimeter formula gives
\begin{equation}\label{eq:baseline-p-squared}
    \int_0^{2\pi}S(t)w(t)\,dt
    =P\int_0^\pi w(t)\,dt=P^2.
\end{equation}
Combining Propositions~\ref{prop:random-separator} and ~\ref{prop:median-concentration} with
\eqref{eq:baseline-p-squared}, we obtain
$$
    \theta(K)\ge \frac{P}{6}.
$$
By smooth approximation, the same chain of inequalities holds for every planar convex body, with \(\tau(\alpha)d\alpha\) replaced by its tangent-angle measure. Thus \(\theta(K)\ge P/6\) in general. It remains to exclude equality for a nondegenerate convex body.
Suppose therefore that
\[
    \theta(K)=\frac{P}{6}.
\]
Then both inequalities supplied by Propositions~5.2 and~5.3 must be
equalities.

Let $\mu$ be the tangent-angle measure of $\partial K$. We first
observe that equality in Proposition~5.3 implies that no open
semicircle contains three points of $\operatorname{supp} \mu$. Indeed, suppose that,
after lifting the angles to $\mathbb R$,
\[
    0=x<z<y=d<\pi,
    \qquad x,z,y\in\operatorname{supp}\mu.
\]
 Choose $a>0$ so small that
\[
    a<z,
    \qquad
    a<\pi-d,
\]
and put $b=\pi-a$.  Then $T_a(d) = (-a,0], \, \delta=b-d$. The cut points of $a$- and $b$-splittings are $q=t+a$ and $q'=t'+b=t+d$, respectively. Now, we want $z$ to lie strictly between them. For that purpose any $t$ in the non-empty interval $(\max\left\{-a, z-d\right\},0)$ works. For every such $t$ the inequality 
$$    \frac{\mu[q,t+h)}{\mu[t,t+h)}\ge
    \frac{\mu[q',t+h)}{\mu[t,t+h)}$$
is in fact strict, since $z \in [q,q')$. The same
remains true for $x$ and $y$ in sufficiently small neighborhoods of
the chosen points. Consequently,
\[
    F_{a,b}(\mu)<F_{b,a}(\mu).
\]
Moreover, this remains strict for all $a$ in a sufficiently small
open interval. That makes inequality \ref{eq:median-improves-sine} strict.
Now, we have shown that
\[
    |\operatorname{supp}\mu|\le4.
\]
Indeed, if five points occurred in cyclic order and $g_1,\ldots,g_5$
were the intervening angular gaps, then the absence of three points
in an open semicircle would imply
\[
    g_i+g_{i+1}\ge\pi
    \qquad (i\!\!\!\pmod 5).
\]
Summing gives $4\pi\ge5\pi$, since $\sum_i g_i=2\pi$, which is
impossible.

Hence $K$ is a polygon with at most four sides. We now show that
Proposition~5.2 is strict for every nondegenerate polygon. Choose an
open interval $J$ of angles $t$ so that for every $t\in J$
$$u_t = (-\sin t, \cos t)$$
is not normal to any edge of $K$.
For $t\in J$ the linear functional $x\mapsto\langle x, u_t \rangle$ attains its minimum and maximum at unique vertices $v$ and $w$, respectively. Those vertices split $\partial K$ in two complementary polygonal chains. Denote their lengths by $S_1,S_2$, and let $\nu_{1,t},\nu_{2,t}$ be their normalized arclength measures projected onto $u_t$.

The equality in the concavity step of
Proposition~5.2 is equivalent to
\[
    \nu_{1,t}=\nu_{2,t}.
\]

Let $e_1,e_2$ be the unit vectors along the edges of
the two chains incident to $v$ and directed away from $v$. For $t\in J$,
\[
    \langle e_i,u_t\rangle>0.
\]
Near the projection of $v$, the densities of
$\nu_{1,t}$ and $\nu_{2,t}$ are respectively
\[
    \frac{1}{S_1\langle e_1,u_t\rangle},
    \qquad
    \frac{1}{S_2\langle e_2,u_t\rangle}.
\]
Therefore equality of the two projected laws would imply
\[
    \bigl\langle S_1e_1-S_2e_2,u_t\bigr\rangle=0.
\]
As $S_1e_1-S_2e_2\ne0$, this can hold only for isolated values of $t$, whereas $J$ is an interval. Hence the Gini
concavity inequality is strict on a set of positive measure, and so
Proposition~5.2 is strict for every nondegenerate polygon.

Therefore,
\[
    \theta(K)>\frac{\operatorname{per}K}{6}.
\]
Sharpness follows from the rectangles
$K_\varepsilon=[0,1]\times[0,\varepsilon]$.
As $\varepsilon\downarrow0$, normalized boundary arclength concentrates
on the two horizontal sides, and hence
$\theta(K_\varepsilon)\to\mathbb E|U-V|=1/3$, where
$U,V\sim\operatorname{Unif}[0,1]$, while
$\operatorname{per}K_\varepsilon\to2$.
\end{proof}

\section{Note added}
 After the first version of our work was posted on arXiv, two related independent works became publicly available. Kukushkin \cite{Kukushkin_mach} gave a proof of Theorem~\ref{thm:endpoint} and the planar Zaporozhets--Tarasov conjecture using a quite different approach. Lotnikov \cite{Lotnikov_thes}, whose work is based on his bachelor's thesis defended in June 2026, constructed centrally symmetric counterexamples in every dimension $d\ge3$, for every moment of the distance. These results overlap, respectively, with our Theorem~\ref{thm:main} and the higher-dimensional counterexamples.

 \section{Acknowledgements}
 The author thanks A. Lotnikov and D. Zaporozhets for useful discussions.

 \section{Declaration of AI use}
 The author used ChatGPT with the GPT-5.6 Sol model during the preparation of
this work. The first proof of the planar case of Zaporozhets-Tarasov conjecture, namely Theorem~\ref{thm:main}, was obtained
by the author without the use of AI tools; it was based on KKT conditions and an
exhaustive case analysis and was substantially more technical. The argument was
subsequently simplified through discussions with ChatGPT. The author also used
ChatGPT to search for the explicit construction in Example 4.2 from the author's
underlying idea described in Remark~\ref{rmk:why} (that includes generation of relevant
code). The author reviewed and verified all AI-assisted output and takes full
responsibility for the content of the article.

\end{document}